\newtheorem{theorem}{Theorem}[section]
\newtheorem{proposition}[theorem]{Proposition}
\newtheorem{lemma}[theorem]{Lemma}
\newtheorem{corollary}[theorem]{Corollary}
\theoremstyle{plain}
\theoremstyle{remark}
\newtheorem{remark}[theorem]{Remark}
\newcommand{\C}{{\mathbb C}}
\newcommand{\Q}{{\mathbb Q}}
\newcommand{\N}{{\mathbb N}}
\newcommand{\Gm}{\mathbb{G}_{\text{m}}}
\newcommand{\Qbar}{\bar{\Q}}
\DeclareMathOperator{\den}{den}
\DeclareMathOperator{\lcm}{lcm}
\DeclareMathOperator{\Norm}{N}
\newcommand{\bP}{{\mathbb P}}
\newcommand{\bfx}{{\mathbf x}}
\newcommand{\bfn}{{\mathbf n}}
\newcommand{\bfu}{{\mathbf u}}
\newcommand{\bA}{{\mathbb A}}
\author{Jason P.~Bell}
\address{
Jason P.~Bell\\
Department of Pure Mathematics\\
University of Waterloo\\
Waterloo, Ontario, Canada N2L 3G1}
\email{jpbell@uwaterloo.ca}
\thanks{Jason Bell was supported by NSERC grant RGPIN-2022-02951. Khoa Nguyen was supported
by NSERC grant RGPIN-2018-03770 and CRC tier-2 research stipend 950-231716. Shaoshi Chen was partially supported by the NSFC
grants (No. \ 12271511 and No. \ 11688101), CAS Project for Young Scientists in Basic Research (Grant No.\ YSBR-034),
the Fund of the Youth Innovation Promotion Association (Grant No.\ Y2022001), CAS,
and the National Key Research and Development Project 2020YFA0712300. }
\author{Shaoshi Chen}
\address{
Shaoshi Chen\\
KLMM, Academy of Mathematics and Systems Science\\
Chinese Academy of Sciences\\
Beijing, 100190, China
}
\email{schen@amss.ac.cn}
\author{Khoa D.~Nguyen}
\address{
Khoa D.~Nguyen \\
Department of Mathematics and Statistics\\
University of Calgary\\
AB T2N 1N4, Canada
}
\email{dangkhoa.nguyen@ucalgary.ca}
\author{Umberto Zannier}
\address{
Umberto Zannier\\
Scuola Normale Superiore, Classe di Scienze Matematiche e Naturali, Pisa, Italy
}
\email{umberto.zannier@sns.it}
\keywords{D-finite power series, heights, rational functions, P\'olya-Carlson theorem.}
\subjclass[2020]{Primary: 13F25, 12H05. Secondary: 11G50.}
\begin{document}
	\title[D-finiteness]{D-finiteness, Rationality, and Height III: Multivariate P\'olya-Carlson Dichotomy}
	\begin{abstract}		
	We prove a result that can be seen as an analogue of the P\'olya-Carlson theorem for multivariate D-finite power series with coefficients in $\bar{\mathbb{Q}}$.  In the special case that the coefficients are algebraic integers, our main result says that if  $$F(x_1,\ldots ,x_m)=\sum  f(n_1,\ldots ,n_m)x_1^{n_1}\cdots x_m^{n_m}$$ is a D-finite power series in $m$ variables with algebraic integer coefficients and if the logarithmic Weil height of $f(n_1,\ldots ,n_m)$ is $o(n_1+\cdots +n_m)$, then $F$ is a rational function and, up to scalar multiplication, every irreducible factor of the denominator of $F$ has the form
		$1-\zeta x_1^{q_1}\cdots x_m^{q_m}$
		where $\zeta$ is a root of unity and $q_1,\ldots ,q_m$ are nonnegative integers, not all of which are zero. 	
				\end{abstract}
	
	\maketitle

\section{Introduction}

	In this paper, we continue the investigation, which was begun in the earlier papers \cite{BNZ20_DF, BNZ22_DFII}, of how height restrictions on D-finite power series can force such series to be rational. 
	We let $\N$ denote the set of positive integers and let $\N_0:=\N\cup\{0\}$. 
	For $m\in \N$, we  
	consider
	the ring $K[[x_1,\ldots,x_m]]$
	of power series in $m$ variables over a field $K$ of 
	characteristic $0$.  Let 
	$\bfn=(n_1,\ldots,n_m)\in \N_0^m$ and let
	$\bfx=(x_1,\ldots,x_m)$ be the vector of the indeterminates $x_1,\ldots,x_m$.
	We write
	$\bfx^{\bfn}$ to denote the monomial
	$x_1^{n_1}\ldots x_m^{n_m}$ having the total degree
	$\Vert \bfn\Vert:=n_1+\cdots+n_m$. 
	We also write
	$\displaystyle\frac{\partial^{\Vert \bfn\Vert}}{\partial\bfx^{\bfn}}$
	to denote the operator
	$$\left(\frac{\partial}{\partial x_1}\right)^{n_1}\ldots
	\left(\frac{\partial}{\partial x_m}\right)^{n_m}$$
	on $K[[x_1,\ldots,x_m]]$. A power series $F(\bfx)\in K[[\bfx]]$ is said to be 
	\emph{D-finite} (over $K(\bfx)$) if 
	all the derivatives
	$\displaystyle\frac{\partial^{\Vert \bfn\Vert}F}{\partial\bfx^\bfn}$
	for $\bfn\in \N_0^m$
	span a finite-dimensional vector space over $K(\bfx)$. 
	When using an uppercase letter to denote a power series, we typically use the corresponding lowercase letter to denote its coefficients, for example:
	$$F(\bfx)=\sum_{\bfn\in\N_0^m} f(\bfn)\bfx^{\bfn}\in K[[\bfx]]$$
	with $f:\ \N_0^m\rightarrow K$. If $F(\bfx)\in\Qbar[[\bfx]]$ is D-finite, there exists a number field containing all the coefficients of $F$ 
since all coefficients can be recursively generated by finitely many initial values in $\Qbar$ (see remarks 3.10 in~\cite{Lip89_DF}).
	
	Univariate D-finite series include many important classes of functions, including algebraic power series, hypergeometric series, exponential functions, and many generating functions that arise in algebraic combinatorics (see \cite{Sta80_DF, Sta99_EC2}).  Multivariate D-finite series were introduced by Lipschitz \cite{Lip89_DF} and have again played an important role. In particular, Lipschitz proved that diagonals of multivariate D-finite series are univariate D-finite series and Christol's conjecture \cite{Chr1,Chr2} asserts that all globally bounded G-functions can be obtained as diagonals of multivariate rational power series.  
	
	In the first paper in this series \cite{BNZ20_DF}, we proved a rationality theorem of sorts: if $F(\bfx)\in\Qbar[[\bfx]]$ is D-finite and $h(f(\bfn))=o(\log\Vert\bfn\Vert)$ as $\Vert\bfn\Vert \to\infty$ then
	$F$ is necessarily a rational function, with its denominator of a special form, and where the coefficients $f(\bfn)$ form a finite set.  This improves earlier results of van der Poorten-Shparlinski \cite{vdPS96_OL} and Bell-Chen \cite{BC17_PS}. 
	In the second paper \cite{BNZ22_DFII}, this result was refined in the univariate case.  Here $h$ is the absolute logarithmic Weil height and we refer the reader to \S\ref{sec:prelim} and to the book \cite{BG06_HI} for more details on heights.  In the special case when the coefficients of $F$ are integers, one can interpret this as saying that if $\log\,f(\bfn)/\log\,\Vert\bfn\Vert \to 0$ then $F$ is necessarily rational.
	
	If, however, one puts additional arithmetical constraints on the coefficients, one can sometimes obtain analogous rationality theorems that are significantly stronger.  A key example of this comes from the P\'olya-Carlson theorem \cite{Pol,Car}, which shows that if $G(x)=\sum g(n) x^n$ is an integer power series in one variable then if $h(g(n))=o(\log\,n)$ (equivalently, $G(x)$ has radius of convergence 1), then $G(x)$ is either rational or admits the unit circle as a natural boundary.  In particular, if $G$ is D-finite with integer coefficients and has radius of convergence at least $1$, then it is necessarily rational, as D-finite power series have only finitely many singularities.  On the other hand, an integrality condition on the coefficients is essential in this result; for example, for $k\in \N$, the univariate irrational D-finite series $$\log(1+x^k)=x^k-\frac{x^{2k}}{2}+\frac{x^{3k}}{3}-\cdots$$ has coefficients whose heights grow logarithmically in $n$.
		
		In this paper, we extend the results from \cite{BNZ20_DF} under the weaker condition that $h(f(\bfn))=o(\Vert\bfn\Vert)$ but with arithmetical constraints on the coefficients. For $a\in\Qbar$, we use $\den(a)$ to denote its denominator: this is the smallest $d\in\N$ such that $da$ is an algebraic integer. We prove the following result.
	
	\begin{theorem}\label{thm:main}
	Let $m\in\N$ and let $F(\bfx)=\displaystyle\sum_{\bfn\in\N_0^m} f(\bfn)\bfx^{\bfn}\in\Qbar[[\bfx]]$ be a D-finite power series in $m$ variables with algebraic coefficients. For $N\in\N_0$, put:
	$$h_N=\max\{h(f(\bfn)):\ \Vert\bfn\Vert\leq N\}\ \text{and}\ d_N=\lcm\{\den(f(\bfn)):\ \Vert\bfn\Vert\leq N\}.$$
	Suppose that 
	$h_N=o(N)$ and $\log d_N=o(N)$ as $N\to\infty$ then the following hold:
	\begin{itemize}
		\item [(a)] $F$ is a rational function;
		\item [(b)] up to scalar multiplication, every irreducible factor of the denominator of $F$ has the form:
		$$1-\zeta \bfx^{\bfn}$$
		where $\zeta$ is a root of unity and $\bfn\in\N_0^m\setminus\{0\}$.
	\end{itemize}
	\end{theorem}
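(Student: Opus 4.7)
My plan is to deduce Theorem~\ref{thm:main} from the univariate analogue established in \cite{BNZ22_DFII}, by reducing the multivariate statement to a family of univariate ones via directional slices, and then promoting the resulting structure back to the multivariate setting.

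\textbf{Step 1 (directional univariate slices).} For each primitive $\mathbf{v}=(v_1,\ldots,v_m)\in\N_0^m\setminus\{0\}$, I would consider
$$F_{\mathbf{v}}(t):=\sum_{k\ge 0}f(k\mathbf{v})\,t^k.$$
Closure properties of D-finite series under diagonals (Lipschitz) and monomial substitutions $x_i\mapsto t^{v_i}$ imply that $F_{\mathbf{v}}(t)$ is univariate D-finite over $\Qbar$. The hypotheses $h_N=o(N)$ and $\log d_N=o(N)$ give $h(f(k\mathbf{v}))=o(k)$ and $\log\den(f(k\mathbf{v}))=o(k)$, so by the univariate main result of \cite{BNZ22_DFII}, each $F_{\mathbf{v}}$ is rational and every irreducible factor of its denominator is, up to a scalar, of the form $1-\zeta t^q$ for a root of unity $\zeta$ and some $q\in\N$.

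\textbf{Step 2 (globalizing rationality).} Next I would promote these univariate rationalities to rationality of $F$ itself. D-finiteness of $F$ forces the $\Qbar(\bfx)$-span of $\{\partial^{\bfn}F:\ \bfn\in\N_0^m\}$ to have a finite dimension $r$; this yields a uniform bound on the degrees of the denominators of the $F_{\mathbf{v}}$, independent of $\mathbf{v}$. Combining this uniform bound with Step~1 through a gluing argument (one constructs a candidate polynomial $Q(\bfx)$ from the asymptotic singular directions and verifies that $Q\cdot F$ has bounded total degree in every slice, hence is a polynomial), one obtains part (a).

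\textbf{Step 3 (structure of the denominator).} Write $F=P/Q$ with coprime $P,Q\in\Qbar[\bfx]$ and $Q(0)=1$. Let $\rho$ be an irreducible factor of $Q$. By Step~1, the restriction of $F$ to each curve $\bfx=(t^{v_1},\ldots,t^{v_m})$ has poles only at roots of unity in $t$, so that $\rho$ vanishes only at such points along every such curve. These ``directional Kronecker'' constraints, coupled with a multivariate Kronecker/Smyth-type theorem on Laurent polynomials whose Mahler measure vanishes at every place, force $\rho$ to be, up to a scalar, of the form $1-\zeta\bfx^{\bfn}$ for some root of unity $\zeta$ and some $\bfn\in\N_0^m\setminus\{0\}$, yielding (b).

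\textbf{Main obstacle.} The principal difficulty is Step~2: passing from rationality of infinitely many directional slices to global multivariate rationality. Directional rationality does not automatically assemble into a global rational function, and one must exploit the uniform dimension bound coming from D-finiteness to control the univariate denominators uniformly before patching them into a single multivariate denominator. The delicate interplay between the archimedean information (from $h_N=o(N)$) and the non-archimedean information (from $\log d_N=o(N)$), used together through the product formula, is also crucial here and in the Kronecker-type argument of Step~3.
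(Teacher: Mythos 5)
Your overall architecture (slice to one variable, apply a univariate rationality criterion, then deduce the torsion shape of the denominator) is similar in spirit to the paper's, but the actual slices you use and, more importantly, the way you propose to globalize are different, and Step 2 as written has a genuine gap that is precisely where the paper's main technical work lies.

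First, a remark on Step 1. The paper does not take ray slices $F_{\mathbf{v}}(t)=\sum_k f(k\mathbf{v})t^k$; it specializes $F(\alpha_1 t,\ldots,\alpha_m t)$ for $(\alpha_1,\ldots,\alpha_m)$ a tuple of roots of unity (after first showing, via a Hadamard-product trick with $\sigma(F)$ and the diagonal $G(t,\ldots,t)$, that the coefficients have polynomial growth). Your ray slices extract $f$ along a single lattice line, which is a strictly smaller amount of information, and D-finiteness of $F_{\mathbf{v}}$ for a \emph{general} primitive $\mathbf{v}$ is not a citable closure property in \cite{Lip89_DF}; it needs an argument. The paper instead uses Lemma~\ref{lem:spec} (closure under linear substitutions), which is elementary and exactly what is needed for the specialization $x_i\mapsto \alpha_i t$. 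Also note: the univariate input you need is not the main theorem of \cite{BNZ22_DFII}; it is Proposition~\ref{prop:univariate} of the present paper (a Hankel-determinant/P\'olya-inequality argument under both $h_N=o(N)$ and $\log d_N = o(N)$).

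The real gap is Step 2. You assert that finite $\bar{\Q}(\bfx)$-dimension of the D-finite module ``yields a uniform bound on the degrees of the denominators of the $F_{\mathbf{v}}$, independent of $\mathbf{v}$,'' and then wave at a gluing. Neither claim is justified. The $\bar{\Q}(\bfx)$-dimension does not directly control degrees of denominators of specializations along varying rays, and even granting a uniform degree bound, ``rational with bounded denominator degree in every slice'' does not by itself produce a single multivariate polynomial $Q$ with $Q\cdot F$ polynomial: you would still have to name $Q$ and show it clears \emph{all} slices simultaneously. This is exactly the content of the paper's key Lemma~\ref{lem:W}: viewing $G(t)=F(x_1t,\ldots,x_mt)$ as a series in $t$ over $\bar{\Q}[x_1,\ldots,x_m]$, one extracts a nontrivial linear ODE in $t$ with polynomial coefficients $s_d(x_1,\ldots,x_m,t)$, and then $W=s_L^{\lfloor C_2\rfloor+1}$ is a single explicit polynomial that, after any specialization at roots of unity, kills the singularities of the slice on the closed unit disc. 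Together with Lemma~\ref{lem:h(root)<C} (uniform height bound on roots of the recurrence's leading polynomial over the Zariski-dense set of torsion tuples), this turns the sliced information into the single identity $W(\bfx,t)F(x_1t,\ldots,x_mt)\in\bar{\Q}[\bfx,t]$, from which rationality of $F$ follows by a $t\mapsto\lambda$ specialization. Your sketch does not supply a substitute for this uniformity, and without one the argument does not close.

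Step 3 is in the right spirit but again not quite what is needed. The paper does not use a Mahler-measure/Smyth-type statement; it shows the hypersurface $\{B(x_1t,\ldots,x_mt)=0\}\subset\Gm^{m+1}$ has a Zariski-dense set of torsion points and then invokes the torsion-coset (Laurent) theorem (\cite[Chapter~3]{BG06_HI}, used also in \cite{BNZ20_DF}) together with the homogeneity decomposition $B(x_1t,\ldots,x_mt)=1+\sum_i t^iB_i(\bfx)$ and a B\'ezout identity to ensure the specialized numerator and denominator stay coprime. Replacing this with ``Laurent polynomials whose Mahler measure vanishes at every place'' is not a correct formulation of the relevant input.

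So: correct high-level strategy, but Step~1 needs the right notion of slice and the right closure lemma, and Step~2 is missing the crucial uniformization device (the paper's Lemma~\ref{lem:W}) that makes the gluing work.
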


 	\begin{remark}
 	Consider the earlier transcendental example:
 	$$\log(1+x^k)=x^k-\frac{x^{2k}}{2}+\frac{x^{3k}}{3}-\cdots=:\sum_{n}a_nx^n$$
 	where $k\in \N$. Put $d_N=\lcm\{\den(a_n):\ n\leq N\}$ then by the Prime Number Theorem, $\displaystyle \log d_N\sim N/k$ as $N\to\infty$. Since $k$ can be arbitrarily large, the condition $\log d_N=o(N)$ in Theorem~\ref{thm:main} is optimal.
 	\end{remark}

 	\begin{corollary}\label{cor:main}
	Let $K$ be a number field and let $S$ be a finite set of places of $K$ containing all the archimedean ones.  Let $m\in\N$ and let 
	$$F(\bfx)=\sum_{\bfn\in\N_0^m} f(\bfn)\bfx^{\bfn}\in K[[\bfx]]$$ be a D-finite power series in $m$ variables with $S$-integer coefficients. Suppose that $h(f(\bfn))=o(\Vert\bfn\Vert)$ as $\Vert\bfn\Vert\to\infty$ then $F$ is a rational function with denominator of the form given in Theorem~\ref{thm:main}(b).	
 	\end{corollary}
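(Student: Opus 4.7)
The plan is to reduce Corollary~\ref{cor:main} to Theorem~\ref{thm:main} by verifying its two hypotheses $h_N=o(N)$ and $\log d_N=o(N)$ from the given assumptions.

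The first hypothesis is elementary. Since $\{\bfn\in\N_0^m:\Vert\bfn\Vert\leq M\}$ is finite for every $M$, the pointwise bound $h(f(\bfn))=o(\Vert\bfn\Vert)$ upgrades to a uniform one: given $\epsilon>0$, choose $M$ with $h(f(\bfn))<\epsilon\Vert\bfn\Vert$ whenever $\Vert\bfn\Vert>M$; the finitely many remaining $\bfn$ contribute an absolute constant to $h_N$ that is dominated by $\epsilon N$ for large $N$, so $h_N\leq\epsilon N$ eventually.

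The second hypothesis is where the $S$-integer assumption enters. For any $S$-integer $a\in K$, the rational prime factors of $\den(a)$ lie in the finite set $T_0\subset\Z$ of primes below the non-archimedean places of $S$. For each $p\in T_0$, standard manipulations with normalized absolute values give $v_p(\den(a))\leq C_{K,p}\, h(a)+1$ for a constant depending only on $K$ and $p$: the contribution to $h(a)$ from a non-archimedean place $v\mid p$ with $v(a)<0$ is a positive multiple of $(-v(a))\log p$, while $v_p(\den(a))\leq\max_{v\mid p}\lceil -v(a)/e(v\mid p)\rceil$. Summing over the finite set $T_0$ yields $\log\den(a)\leq C_1 h(a)+C_2$ for constants $C_1,C_2$ depending only on $K$ and $S$, and maximizing over $\Vert\bfn\Vert\leq N$ gives $\log d_N\leq C_1 h_N+C_2=o(N)$.

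With both hypotheses of Theorem~\ref{thm:main} established, its conclusions (a) and (b) yield precisely those of the corollary. The whole argument is essentially a bookkeeping exercise; the only mildly delicate point is tracking the absolute-value normalizations in the comparison between $\log\den(a)$ and $h(a)$ for $S$-integers, which is routine given that $T_0$ is finite.
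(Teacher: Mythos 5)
Your proposal is correct and follows essentially the same route as the paper: reduce to Theorem~\ref{thm:main} by verifying $h_N=o(N)$ (elementary upgrading of the pointwise condition) and $\log d_N=o(N)$ from the $S$-integer hypothesis. The paper organizes the second bound as a product of normalized absolute values over $v\in M_K^0\cap S$, whereas you phrase it via $p$-adic valuations at the finitely many primes below $S$; these are the same estimate in slightly different bookkeeping.
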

 	 We note, in fact, that one can view Corollary \ref{cor:main} as being a multivariate version of the P\'olya-Carlson theorem for D-finite series.  In particular, if one takes $K=\mathbb{Q}$ and $S$ to be the archimedean place then it says that if $F(\bfx)\in \mathbb{Z}[[x_1,\ldots ,x_m]]$ is a D-finite power series in $m$ variables that converges in the open unit polydisc then $F$ is a rational function with denominator of the form given in Theorem~\ref{thm:main}(b).
	 In general, there do exist multivariate analogues of the P\'olya-Carlson theorem (see, for example, Straube \cite{Str}), but they necessarily impose significantly stronger conditions on the region of convergence than in the univariate case.  For example, the bivariate series $\sum_n x^n y^{n!}$ is irrational and converges on the unit polydisc.  For this reason, an additional assumption, such as D-finiteness, is necessary for showing that convergence on the polydisc is sufficient to obtain rationality.

	For a tuple $(\alpha_1,\ldots,\alpha_m)$ of algebraic numbers, we let $H(\alpha_1,\ldots,\alpha_m)$ denote the height of the affine point $(\alpha_1,\ldots,\alpha_m)\in \bA^m(\Qbar)$; in other words, 
	\begin{equation}
	\label{eq:H1} H(\alpha_1,\ldots ,\alpha_m):=H([\alpha_1:\cdots:\alpha_m:1]).
	\end{equation}
	  We then let
 	\begin{equation}
	\label{eq:H2}
	h(\alpha_1,\ldots,\alpha_n)=\log H(\alpha_1,\ldots,\alpha_n).\end{equation}
 	One can reinterpret the main result of \cite{BNZ20_DF} as saying that if $$F(\bfx)=\sum_{\bfn\in\N_0^m} f(\bfn)\bfx^{\bfn}\in\Qbar[[\bfx]]$$ is D-finite and 
 	$h(f(\bfn):\ \Vert\bfn\Vert\leq N)=o(\log N)$ then 
 	$h(f(\bfn):\ \Vert\bfn\Vert\leq N)=O(1)$. This suggests, in a certain sense, that the function $\log N$ is a ``boundary''  function for the growth of
 	$h(f(\bfn):\ \Vert\bfn\Vert\leq N)$: if this latter quantity is much smaller than $\log N$ as $N\to\infty$ then it falls into the next level given in the above-stated dichotomy; namely it is $O(1)$. 
	
 	It follows from Theorem~\ref{thm:main} that a linear function in $N$ is the level right above the function $\log N$ for the growth of $h(f(\bfn):\ \Vert\bfn\Vert\leq N)$:
 	\begin{corollary}\label{cor:o(N) to O(log N)}
 	Let $m\in \N$ and let $F(\bfx)=\displaystyle\sum_{\bfn\in\N_0^m} f(\bfn)\bfx^{\bfn}\in\Qbar[[\bfx]]$ be a D-finite power series in $m$ variables with algebraic coefficients. If $h(f(\bfn):\ \Vert\bfn\Vert\leq N)=o(N)$ as $N\to\infty$
 	then $h(f(\bfn):\ \Vert\bfn\Vert\leq N)=O(\log N)$ as $N\to\infty$.  	\end{corollary}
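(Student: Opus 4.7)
The plan is to deduce Corollary~\ref{cor:o(N) to O(log N)} from Theorem~\ref{thm:main}. Write $H_N := h(f(\bfn):\Vert\bfn\Vert\leq N)$ for the tuple height appearing in the hypothesis, so $H_N = o(N)$. First I would show that this implies both $h_N = o(N)$ and $\log d_N = o(N)$ in the notation of Theorem~\ref{thm:main}. The bound $h_N \leq H_N = o(N)$ is immediate since each coordinate of a tuple has height at most the tuple height. The bound $\log d_N = o(N)$ follows from a standard height computation: at each finite place of the fixed number field $K$ containing all coefficients of $F$ (which exists by D-finiteness), the contribution to $H_N$ dominates the $p$-adic content of $d_N$ up to a constant depending only on $[K:\Q]$ and the local ramification indices.

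Theorem~\ref{thm:main} then yields $F = P(\bfx)/Q(\bfx)$ rational, with $Q(\bfx)$ a scalar multiple of $\prod_{j=1}^r (1 - \zeta_j \bfx^{\bfe_j})^{m_j}$ where the $\zeta_j$ are roots of unity and $\bfe_j \in \N_0^m\setminus\{0\}$. Expanding $1/Q$ as a product of (derivatives of) geometric series, $f(\bfn)$ becomes a finite sum of terms of the form $c_\bfm \binom{k_1+m_1-1}{m_1-1}\cdots\binom{k_r+m_r-1}{m_r-1}\prod_j \zeta_j^{k_j}$, indexed by nonnegative tuples $(\bfm, k_1, \ldots, k_r)$ with $\bfm + \sum_j k_j \bfe_j = \bfn$, where $c_\bfm$ is the coefficient of $\bfx^\bfm$ in $P$. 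The number of such tuples is polynomially bounded in $N$, and each summand has height $O(\log N)$ (roots of unity contribute $0$, the binomial coefficients are bounded polynomially in $N$, and coefficients of $P$ contribute $O(1)$). Subadditivity of heights then gives $h(f(\bfn)) = O(\log\Vert\bfn\Vert)$.

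To upgrade this to the tuple height $H_N$, I would decompose into archimedean and non-archimedean contributions. At each archimedean place $v$, $|f(\bfn)|_v$ is polynomially bounded in $N$, so that contribution is $O(\log N)$. For the finite part, letting $D_P \in \N$ be the common denominator of the coefficients of $P$ and observing that the $\zeta_j$ are units, every $D_P f(\bfn)$ is an algebraic integer; therefore $\log d_N = O(1)$ and the non-archimedean contribution to $H_N$ is also $O(1)$. Combining, $H_N = O(\log N)$, as required. The hard part will be the first step's comparison between $\log d_N$ and $H_N$ over a general number field $K$, which requires careful bookkeeping of ramification indices and residue degrees; nonetheless it amounts to a routine exercise in the theory of heights.
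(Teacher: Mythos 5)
Your proposal follows essentially the same route as the paper's proof: deduce from Theorem~\ref{thm:main} that $F$ is rational with the special cyclotomic denominator, expand, observe that each $f(\bfn)$ is a sum of polynomially many manageable terms, and then bound the height place by place (bounded denominators at the finite places, polynomial growth at the archimedean ones), giving $O(\log N)$.

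One small slip worth noting: in the middle paragraph, ``subadditivity of heights'' does not by itself yield $h(f(\bfn))=O(\log\Vert\bfn\Vert)$ from polynomially many summands each of height $O(\log N)$ --- the naive bound $h(\sum a_i)\le\sum h(a_i)+\log M$ would give $O(N^k\log N)$, which is useless here. What actually works is exactly what you do in the final paragraph: separate the archimedean contribution (each $|f(\bfn)|_v$ is polynomially bounded, since the summands have polynomially bounded modulus and there are polynomially many of them) from the non-archimedean contribution (all $D_P f(\bfn)$ are algebraic integers because the $\zeta_j$ and binomial coefficients are algebraic integers). That place-by-place argument is the correct one, and it simultaneously controls both the individual heights and the tuple height $H_N$; the ``subadditivity'' step is not needed and should be dropped.
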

 	
 	One can regard Corollary~\ref{cor:o(N) to O(log N)} and the discussion before it as variants of results and problems discussed in \cite[Section~5]{BNZ22_DFII}. It is expected that the next level above the linear function in $N$ is
 	the function $N\log N$: if $h(f(\bfn):\ \Vert\bfn\Vert\leq N)=o(N \log N)$ then
 $h(f(\bfn):\ \Vert\bfn\Vert\leq N)=O(N)$. However even the univariate case of this remains a long standing open problem in the theory of Siegel E-functions  (see \cite{Sie29_U}, \cite[pp.~11--12]{Beu08_E}, \cite[Section~2.1]{FR22_OS} and\cite[\S4]{BNZ22_DFII} for more details).
 
 The paper \cite{BNZ20_DF} has motivated several results in both number theory and dynamics, and it is our hope that this paper will similarly motivate results where one imposes additional arithmetical constraints. In arithmetic dynamics, this work motivated a height gap conjecture in work of Bell, Hu, Ghioca, and Satriano  \cite{BHS20_HG,BGS21_DU} concerning the quantity
	$\limsup_{n\to\infty} h(f(\Phi^n(x)))/\log n$,
	where $\Phi:\ X\dashrightarrow X$ and $f:\ X\dashrightarrow\bP^1$ are rational maps
	on quasi-projective varieties defined over $\Qbar$ and $x\in X(\Qbar)$ is such that
	the forward orbit of $x$ under $\Phi$ is well defined and avoids the indeterminacy locus of $f$.  It would be interesting if there were an arithmetic-dynamical height gap result analogous to that of Corollary \ref{cor:o(N) to O(log N)}, which holds for maps under the additional constraint that $\{f(\Phi^n(x))\colon n\ge 0\}$ now takes values in a ring of $S$-integers of a number field, where $S$ is a finite set of places that includes all archimedean ones. In the theory of Mahler functions, Adamczewski, Bell, and Smertnig \cite{ABS20_AH} provide a complete classification of the possible height growth of the coefficients of a Mahler function.  In fact, one can show from the recurrences satisfied by Mahler functions that their coefficients lie in a finitely generated ring and hence these results are close in spirit to the arithmetical constraints we consider in this paper. We note that \cite{BNZ20_DF}, together with earlier work of Perelli and Zannier \cite{PZ84_OR,Zan96_OP}, motivated \cite{BN21_AA}, in which a function field analogue of a conjecture of Ruzsa is established and, in part, helped to inspire Dimitrov's spectacular solution \cite{Dim19_AP} of the Schinzel-Zassenhauss conjecture from the 1960s. More recently, the paper \cite{BNZ22_DFII} motivates a more general criterion for the univariate P\'olya-Carlson dichotomy \cite{BGNS23_AG} with applications to the Artin-Mazur zeta function in certain algebraic dynamical systems  \cite{BC18_DO,BGNS23_AG,BCH}.

The outline of this paper is as follows.  In \S\ref{sec:prelim} we give the basic background on heights and show how the corollaries stated in the introduction quickly follow from Theorem \ref{thm:main} and in \S\ref{sec:thm} we prove our main result.

\section{Some preliminary results and proof of the corollaries}\label{sec:prelim}
In this section, we give a brief overview of the height machinery for later use and we prove Corollary \ref{cor:main} and  Corollary \ref{cor:o(N) to O(log N)} under the assumption that Theorem \ref{thm:main} holds.  In addition we prove the univariate case of Theorem \ref{thm:main}.

For a number field $K$,
	we let $M_K^\infty$ denote the set of archimedean places (equivalence classes of archimedean absolute values) of $K$ and we let 
	$M_K^0$ denote the set of finite places.  
  	We write $M_K=M_K^\infty\cup M_K^0$.
	 For every place
  	$w\in M_K$, let $K_w$ denote the completion of 
  	$K$ with respect to $w$ and we let
  	$d(w)$ denote the quantity $[K_w:\Q_v]$ where $v$ is the restriction of
  	$w$ to $\Q$. 
	We now follow the treatment given in \cite[Chapter~1]{BG06_HI}:
  	for every $w\in M_K$ with restriction $v$ on $\Q$, we may always take $\vert\cdot \vert_v$ to be either the ordinary Euclidean absolute value or the $p$-adic absolute value for some prime $p$.
  	We can then normalize $\vert \cdot\vert_w$ by defining
  	$$\vert x\vert_w = \vert \Norm_{K_w/\Q_v}(x) \vert_v^{1/[K:\Q]}.$$
	Let $m\in\N$ and $P \in \bP^m(\Qbar)$, let $K$ be a number
  	field such that $P$ has a representative 
  	$\bfu=(u_0,\ldots,u_m)\in K^{m+1}\setminus\{\mathbf 0\}$
  	and define:
  	$$H(P)=\prod_{w\in M_K} \max_{0\leq i\leq m}\vert u_i\vert_w.$$
  	Define $h(P)=\log (H(P))$. 
  	
  	For $\alpha\in \Qbar$, 
  	we write $H(\alpha)=H([\alpha:1])$
  	and $h(\alpha)=\log(H(\alpha))$. More generally, for a tuple
  	$(\alpha_1,\ldots,\alpha_m)$ of algebraic numbers, we define $H(\alpha_1,\ldots,\alpha_m)$ and
  	$h(\alpha_1,\ldots,\alpha_m)$ as in Equations (\ref{eq:H1}) and (\ref{eq:H2}).

\begin{proof}[Proof of Corollary~\ref{cor:main} assuming Theorem~\ref{thm:main}]
We assume the notation appearing in the statement of 
Corollary~\ref{cor:main}. For every $a\in K$, we have that $
\den(a)$ divides
$$\prod_{v\in M_K^0,\ \vert a\vert_v>1} \vert a\vert_v^{[K:\Q]}=\prod_{v\in M_K^0} \max\{\vert a\vert_v^{[K:\Q]},1\}.$$
For $N\in\N_0$, put 
$$d_N=\lcm\{\den(f(\bfn)):\ \Vert\bfn\Vert\leq N\}\qquad \textrm{and}\qquad h_N=\max\{h(f(\bfn)):\ \Vert\bfn\Vert\leq N\}.$$ We have that $d_N$ divides
\begin{align*}
&\prod_{v\in M_K^0} \max\{\max\{\vert f(\bfn)\vert_v^{[K:\Q]},1\}:\ \Vert\bfn\Vert\leq N\}\\
&=\prod_{v\in M_K^0\cap S} \max\{\max\{\vert f(\bfn)\vert_v^{[K:\Q]},1\}:\ \Vert\bfn\Vert\leq N\}\\
&\leq \exp([K:\Q]\vert S\vert h_N)=e^{o(N)}
\end{align*}
since each $f(\bfn)$ is an $S$-integer and $h_N=o(N)$. Therefore the power series $F(\bfx)$ satisfies the conditions in Theorem~\ref{thm:main} and we finish the proof.
\end{proof} 

\begin{proof}[Proof of Corollary~\ref{cor:o(N) to O(log N)} assuming Theorem~\ref{thm:main}] 
For $N\in\N_0$, let $h_N$ and $d_N$ be as in the statement of Theorem~\ref{thm:main}. By elementary estimates as in the above proof of Corollary~\ref{cor:main}, we have that the property $h(f(\bfn):\ \Vert\bfn\Vert\leq N)=o(N)$ as $N\to\infty$ implies the property that $h_N=o(N)$ and $\log d_N=o(N)$ as $N\to\infty$. Therefore $F$ is a rational function of the form:
\begin{equation}\label{eq:F as A/special denominator}
\frac{A(\bfx)}{(1-\zeta_1\bfx^{\bfn_1})\cdots (1-\zeta_k\bfx^{\bfn_k})}
\end{equation}
where $A(\bfx)\in\Qbar[\bfx]$, $k\in\N_0$, $\zeta_i$ is a root of unity, and $\bfn_i\in\N_0^m\setminus\{0\}$ 
for $1\leq i\leq k$. 

Let $K$ be a number field containing the coefficients of $A$ and all the $\zeta_i$'s. There exists a positive constant $C$ such that 
when expanding \eqref{eq:F as A/special denominator} into a power series in $\bfx$, 
we can express $f(\bfn)$ for $\bfn\neq 0$ as a sum of at most $C\Vert\bfn\Vert^k$ many terms from a finite set. Therefore the $f(\bfn)$'s have bounded denominators and for every $v\in M_K^{\infty}$ we have
$\vert f(\bfn)\vert_v\ll\Vert\bfn\Vert^k$ as $\Vert\bfn\Vert\to\infty$. This implies
$h(f(\bfn):\ \Vert\bfn\Vert\leq N)=O(\log N)$ as $N\to\infty$.
\end{proof}

  We conclude this section with a proof of Theorem~\ref{thm:main} in the case of univariate power series using  well-known techniques.
  \begin{proposition}\label{prop:univariate}
  Let $F(x)=\displaystyle\sum_{n=0}^{\infty}f(n)x^n\in\Qbar[[x]]$ be a univariate D-finite power series. For $N\in\N_0$, let $h_N$ and $d_N$ be as in Theorem~\ref{thm:main}. Suppose
  that $h_N=o(N)$ and $\log d_N=o(N)$ as $N\to\infty$. Then $F$ is a rational function and all of its finite poles are roots of unity.
  \end{proposition}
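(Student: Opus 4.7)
The plan is to first prove that $F$ is rational, and then pin down the shape of its finite poles.

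Fix a number field $K$ containing all the $f(n)$, and control the $v$-adic radius of convergence $R_v$ of $F$ at every place $v$ of $K$. At each archimedean $v$, the elementary inequality $\max(1, |f(n)|_v) \leq H(f(n))$ combined with $h_N = o(N)$ gives $\limsup_n |f(n)|_v^{1/n} \leq 1$, so $R_v \geq 1$. At each finite place $v$, using that $\den(f(n)) \cdot f(n)$ is an algebraic integer one checks directly that $|f(n)|_v \leq \den(f(n)) \leq d_n$ in the normalization of Section~\ref{sec:prelim}, and $\log d_n = o(n)$ again yields $R_v \geq 1$.

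The heart of the proof is then promoting ``$R_v \geq 1$ at every place'' to rationality for a D-finite series, which is the main technical obstacle. Radius $\geq 1$ at every place alone is not enough, as the D-finite yet irrational $\log(1+x)$ illustrates: the fact that $h_N$ and $\log d_N$ are both strictly $o(N)$ must be used in a classical P\'olya-Bertrandias-Cantor style Pad\'e/Hankel-determinant argument. Concretely, I would use D-finiteness to construct, for each large $N$, polynomials $P_N, Q_N \in K[x]$ of degree bounded linearly in $N$ with $h(P_N), h(Q_N) = o(N)$, satisfying $Q_N F - P_N = O(x^{M_N})$ for some $M_N$ much larger than $\deg P_N + \deg Q_N$; the construction uses the finite-dimensionality of the $K(x)$-span of the derivatives of $F$ together with a Siegel-lemma-type selection adapted to the height data. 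If $F$ were not rational, the first nonzero coefficient of $Q_N F - P_N$ would be a nonzero element of $K$ whose absolute value at every place is small: small at archimedean places via the vanishing order combined with the $R_v \geq 1$ estimate and the height bounds on $P_N, Q_N$, and small at finite places via the same bounds together with the denominator control from $\log d_N = o(N)$. The product formula then yields a contradiction once $N$ is sufficiently large.

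Once rationality is established, write $F = P/Q$ with $\gcd(P, Q) = 1$ in $K[x]$ and factor $Q(x) = c \prod_i (1 - \alpha_i x)^{e_i}$ over $\overline{K}$. The finite poles of $F$ sit at $1/\alpha_i$, and convergence of $F$ on $\{|x|_v < 1\}$ at every place $v$ of $K(\alpha_i)$ forces $|\alpha_i|_v \leq 1$. The product formula then gives $|\alpha_i|_v = 1$ at every place of $K(\alpha_i)$, so $\alpha_i$ is an algebraic integer all of whose conjugates have absolute value $1$; Kronecker's theorem concludes that $\alpha_i$, and hence the pole $1/\alpha_i$, is a root of unity. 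The hardest part is engineering the Pad\'e approximants with height control sharp enough to close the product-formula estimate; once rationality is secured, the root-of-unity conclusion is a routine combination of the local radius bounds with Kronecker's theorem.
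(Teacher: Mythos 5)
Your overall strategy -- establish $R_v \ge 1$ at every place, then run a P\'olya--Carlson type determinant/Pad\'e argument to get rationality, then use Kronecker's theorem for the roots-of-unity conclusion -- is the right framework and matches the paper in spirit. The local estimates in your first paragraph and the final Kronecker step are fine and essentially identical to what the paper does (the paper delegates the pole analysis to \cite[Proposition~3.6]{BNZ20_DF}). The gap is in the middle.

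You correctly flag that $R_v \ge 1$ at every place is not enough and that ``the hardest part is engineering the Pad\'e approximants,'' but your sketch misidentifies where D-finiteness must enter. You invoke it to \emph{construct} the approximants $(P_N, Q_N)$ ``together with a Siegel-lemma-type selection,'' and then claim the leading coefficient of $Q_N F - P_N$ is small at archimedean places ``via the vanishing order combined with the $R_v \ge 1$ estimate and the height bounds on $P_N, Q_N$.'' That chain does not close: with only $R_v \ge 1$ and height bounds $h(P_N), h(Q_N) = o(N)$, the coefficient of $x^{M_N}$ in $Q_N F$ is a sum of $O(N)$ products each of size $e^{o(M_N)}$, so you only get a bound of the shape $e^{o(M_N)}$ at archimedean places -- not the exponentially small bound $r^{M_N}$ (with $r<1$) you need to beat the product formula. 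Indeed, the $o(N)$ conditions alone only give the P\'olya--Carlson dichotomy ``rational or unit circle is a natural boundary'' (think of a random $\{0,1\}$-coefficient series: all your $o(N)$ hypotheses hold, yet it is irrational). The missing step is precisely that D-finiteness implies $F$ has \emph{finitely many singularities}, hence extends analytically past the closed unit disc (except at finitely many points) at each archimedean place; it is this analytic continuation, not the height bounds, that feeds a P\'olya inequality (or transfinite-diameter estimate) to give $\prod_{v\in M_K^\infty}|\Delta_n|_v < r^{n^2}$ with $r<1$, after which the denominator bound $\log d_N = o(N)$ and the product formula force $\Delta_n = 0$ for large $n$, and Kronecker's rationality criterion finishes. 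The paper runs exactly this argument with Hankel determinants; your Pad\'e formulation could be made to work too, but only after you replace ``$R_v \ge 1$'' by the strictly stronger ``$F$ continues analytically past $\mathbb{T}$ at each archimedean place because D-finite series have finitely many singularities,'' which is the step your sketch omits.
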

 \begin{proof}
 Let $K$ be a number field such that $F(x)\in K[x]$. Consider the Hankel determinant:
\begin{equation*}
\Delta_n:=\det\begin{pmatrix} a_0 & a_{1} & \cdots & a_{n} \\
 a_{1} & a_{2} &\cdots  & a_{n+1}  \\
 \ldots  \\
 a_{n} & a_{n+1} &\cdots  & a_{2n} \end{pmatrix}
\end{equation*}
for $n\geq 0$. The given condition $h_N=o(N)$ implies that $F$ is convergent in the open unit disc $\vert x\vert_v<1$ of $K_v$ for every $v\in M_K$. Since $F$ is D-finite, it can be extended analytically beyond the open unit disc $\vert x\vert_v<1$
for every $v\in M_K^{\infty}$. By Polya's inequality \cite[Section~2]{BNZ22_DFII}, there exists $r<1$ such that
$$\prod_{v\in M_K^{\infty}} \vert\Delta_n\vert_v<r^{n^2}$$
for all large $n$. Together with the condition $\log d_N=o(N)$, we have that
$d_{2n}^{n+1}\Delta_n$ is an algebraic integer and its norm is less than $1$ when $n$ is large. Therefore $\Delta_n=0$ when $n$ is large. Kronecker's criterion \cite[Section~2]{BNZ22_DFII} implies that $F$ is rational. The assertion on the poles of $F$ follows from \cite[Proposition~3.6]{BNZ20_DF}.
 \end{proof}

\section{Proof of Theorem~\ref{thm:main}}
\label{sec:thm}
In this section, we give a proof of our main result.  
We henceforth let $\mathbb{T}$ denote the unit circle. The following lemma is an easy corollary of Proposition 2.3 (iii) in~\cite{Lip89_DF}, whose proof was not given. For the sake of completeness, we provide the detailed proof of this lemma.
\begin{lemma} Let $m\in\N$, let $K$ be a field of characteristic zero, and let $F(\bfx)=\displaystyle\sum  f(\bfn) \bfx^{\bfn} \in K[[\bfx]]$ be a D-finite power series in $m$ variables.  Then:
\begin{itemize}
\item [(a)] If $\alpha_1,\ldots ,\alpha_{m-1}\in K$ then $F(x_1,\ldots ,x_{m-1},\sum_{i=1}^{m-1} \alpha_i x_i) \in K[[x_1,\ldots ,x_{d-1}]]$ is D-finite.
\item [(b)] If $\beta_1,\ldots ,\beta_{m}\in K$ then $F(\beta_1 t,\ldots ,\beta_m t)\in K[[t]]$ is D-finite.
\end{itemize}
\label{lem:spec}
\end{lemma}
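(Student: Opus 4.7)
The plan is to prove (a) first and then derive (b) by iterating (a) one variable at a time. For (a), I would factor the substitution $x_m\mapsto L:=\sum_{i<m}\alpha_i x_i$ through two separate operations — (I) the triangular linear change of variables $\phi^{*}$ on $K[[y_1,\ldots,y_m]]$ sending $y_m\mapsto y_m+L(y_1,\ldots,y_{m-1})$ and fixing the other $y_i$, and then (II) the specialization $y_m=0$ — and argue that each preserves D-finiteness. The key auxiliary tool is the standard equivalent formulation of D-finiteness (see, e.g., \cite{Lip89_DF}): $F\in K[[\bfx]]$ is D-finite over $K(\bfx)$ if and only if, for each $k\in\{1,\ldots,m\}$, there exist polynomials $c_{k,0},\ldots,c_{k,N_k}\in K[\bfx]$ with $c_{k,N_k}\neq 0$ such that $\sum_j c_{k,j}(\bfx)\,\partial_{x_k}^j F=0$.

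For step (I), note that $\phi^{*}$ is invertible (its inverse sends $y_m\mapsto y_m-L$) and restricts to an automorphism of $K(y_1,\ldots,y_m)$. The chain rule gives
$$\partial_{y_k}\circ\phi^{*}=\phi^{*}\circ(\partial_{x_k}+\alpha_k\partial_{x_m})\text{ for }k<m,\qquad\partial_{y_m}\circ\phi^{*}=\phi^{*}\circ\partial_{x_m},$$
and the derivations $\partial_{x_k}+\alpha_k\partial_{x_m}$ together with $\partial_{x_m}$ commute pairwise. Hence every iterated partial derivative of $\phi^{*}F$ has the form $\phi^{*}(DF)$ for some constant-coefficient polynomial $D$ in $\partial_{x_1},\ldots,\partial_{x_m}$, so the $K(\mathbf y)$-span of derivatives of $\phi^{*}F$ lies inside the $\phi^{*}$-image of the $K(\bfx)$-span of derivatives of $F$, which is finite-dimensional by D-finiteness of $F$. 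Thus $\phi^{*}F$ is D-finite.

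For step (II), set $H=\phi^{*}F$ and $G(y_1,\ldots,y_{m-1}):=H(y_1,\ldots,y_{m-1},0)$, so that $G=F(y_1,\ldots,y_{m-1},L)$. For each $k<m$, apply the characterization to produce a polynomial ODE $\sum_j c_{k,j}(\mathbf y)\,\partial_{y_k}^j H=0$ with $c_{k,N_k}\neq 0$. Let $e_k$ be the largest nonnegative integer with $y_m^{e_k}\mid c_{k,j}$ for every $j$; because $K[[\mathbf y]]$ is a domain, the same relation holds after replacing each $c_{k,j}$ by $c_{k,j}/y_m^{e_k}$, and at least one of the new coefficients is now not divisible by $y_m$. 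Specializing $y_m=0$, and using that $\partial_{y_k}$ commutes with this specialization when $k<m$, yields a nontrivial polynomial ODE in $y_k$ annihilating $G$. Invoking the reverse direction of the characterization, $G$ is D-finite over $K(y_1,\ldots,y_{m-1})$, proving (a).

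Part (b) then follows from (a) by iteration. If all $\beta_i=0$, then $F(\beta_1 t,\ldots,\beta_m t)=f(\mathbf 0)$ is trivially D-finite; otherwise relabel so that $\beta_1\neq 0$. Applying (a) to $F$ with $\alpha_1=\beta_m/\beta_1$ and $\alpha_2=\cdots=\alpha_{m-1}=0$ shows $F_1(x_1,\ldots,x_{m-1}):=F(x_1,\ldots,x_{m-1},(\beta_m/\beta_1)x_1)$ is D-finite; iterating $m-1$ times total yields the D-finite univariate series $F(x_1,(\beta_2/\beta_1)x_1,\ldots,(\beta_m/\beta_1)x_1)\in K[[x_1]]$, and rescaling $x_1=\beta_1 t$ preserves univariate D-finiteness (the defining ODE rescales accordingly) to give $F(\beta_1 t,\ldots,\beta_m t)$. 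The main technical obstacle is the specialization step in (II): setting $y_m=0$ naively in the ODE — without first dividing out by $y_m^{e_k}$ — can produce the vacuous relation $0=0$ and lose all information about $G$; the division guarantees that the specialized ODE is nontrivial.
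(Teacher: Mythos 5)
Your proof is correct and takes a genuinely different route from the paper's. For part~(a), the paper works directly with $G = F(x_1,\ldots,x_{m-1},\sum_i\alpha_i x_i)$: it expresses $D_{x_1}^n G$ as the specialization at $x_m=\sum_i\alpha_i x_i$ of the Leibniz-type sum $G_n = \sum_j\binom{n}{j}\alpha_1^{n-j}D_{x_1}^jD_{x_m}^{n-j}F$, takes a linear dependence $\sum_i P_iG_i=0$ (available since $F$ is D-finite), clears any common factor from the $P_i$ so that the form $x_m-\sum_i\alpha_i x_i$ does not divide all of them, and specializes to obtain a non-trivial relation among the $D_{x_1}^i G$. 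You instead factor the substitution as (I) the invertible shear $\phi^*$, shown to preserve D-finiteness by conjugating partial derivatives through $\phi^*$ (which works because $\phi^*$ is an automorphism of $K(\mathbf{y})$ and the derivations $\partial_{x_k}+\alpha_k\partial_{x_m}$, $\partial_{x_m}$ span the same space as $\partial_{x_1},\ldots,\partial_{x_m}$), followed by (II) the specialization $y_m=0$, handled via the one-variable-ODE characterization of D-finiteness after dividing out the maximal power of $y_m$. Both proofs must avoid the specialization collapsing to $0=0$ and both do so by clearing a common factor first, so the essential content is the same; your version is more modular, isolating two reusable facts (invertible linear changes of variables preserve D-finiteness; setting one variable to $0$ preserves D-finiteness), at the cost of carrying an extra variable through step~(I) and explicitly invoking the ODE characterization, while the paper's argument is shorter and self-contained. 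For part~(b) the approaches are essentially identical: the paper phrases the reduction as an induction on $m$, while you unroll the iteration of~(a) and finish with a univariate rescaling.
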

\begin{proof}
Let $G(x_1,\ldots ,x_{m-1})=F(x_1,\ldots ,x_{m-1}, \sum_{i=1}^{m-1} \alpha_i x_i)$.  By symmetry, it suffices to show that $\{D_{x_1}^i G\colon i\ge 0\}$ is linearly dependent over $K(x_1,\ldots ,x_{m-1})$, where $D_{x_i}$ is the operator given by partial differentiation with respect to $x_i$.  For $n\ge 0$ we have:
\begin{equation}\label{eq:D_x1^nG}
D^n_{x_1}G = \left(\sum_{j=0}^n {n\choose j} \alpha_1^{n-j} D_{x_1}^j D_{x_m}^{n-j} F \right)\Biggr\rvert_{\substack(x_1,x_2,\ldots,\alpha_1 x_1+\cdots +\alpha_{m-1} x_{m-1})}.
\end{equation}
Let $G_n=\displaystyle\sum_{j=0}^n {n\choose j} \alpha_1^{n-j} D_{x_1}^j D_{x_d}^{n-j} F$ for $n\ge 0$.
Since $F$ is D-finite, we have a linear dependence of the form
\begin{equation}\label{eq:sum P_iG_i=0}
\sum_{i=0}^N P_i(x_1,\ldots ,x_m) G_i(x_1,\ldots ,x_m)=0
\end{equation}
for some $N\ge 0$, with 
$P_0,\ldots,P_N\in K[x_1,\ldots,x_m]$, not all of which are $0$.  Furthermore, we may assume without loss of generality that $P_0,\ldots ,P_N$ have no non-trivial common factor and hence $x_m-\displaystyle\sum_{i=1}^{m-1} \alpha_i x_i$ does not divide all of $P_0,\ldots ,P_m$.  Let $Q_j(x_1,\ldots ,x_{m-1})=P_j(x_1,\ldots ,x_{m-1}, \sum_{i=1}^{m-1} \alpha_i x_i)$ for $j=0,\ldots,N$ then at least one of the $Q_j$'s is nonzero.  Then by \eqref{eq:D_x1^nG} and \eqref{eq:sum P_iG_i=0} we have the non-trivial linear dependence relation:
$$\sum_{i=0}^N Q_i(x_1,\ldots ,x_{m-1}) D_{x_1}^i G =0.$$
This proves part (a) of the lemma.

We prove part (b) by induction on $m$.  When $m=1$, the result is immediate.  We consider $m\geq 2$ and suppose that the statement is valid for power series in fewer than $m$ variables. The case $\beta_1=\cdots=\beta_m=0$ is trivial, so without loss of generality we may assume that $\beta_1\neq 0$. By part (a), we have 
$F(x_1,\ldots ,x_{m-1}, (\beta_m/\beta_1) x_1)$ is a D-finite power series  in $m-1$ variables.  Then making the substitution 
$x_i\mapsto \beta_i t$ for $i=1,\ldots ,m-1$ and using the induction hypothesis gives the desired result.
\end{proof}

\begin{lemma}\label{lem:C1n^C2}
Let $m$, $F=\sum f(\bfn)\bfx^{\bfn}$, $h_N$, and $d_N$ be as in the statement of Theorem~\ref{thm:main}.  If with $h_N=o(N)$ and $\log d_N=o(N)$, then there exist positive constants $C_1$ and $C_2$ such that the following hold.
\begin{itemize}
\item [(a)] $\vert f(\bfn)\vert \leq C_1\Vert\bfn\Vert^{C_2}$ when $\Vert\bfn\Vert$ is large.
\item [(b)] For every $(\alpha_1,\ldots,\alpha_m)\in\mathbb{T}^m$ and for every positive integer $n$, the coefficient of $t^n$ in
$F(\alpha_1t,\ldots,\alpha_mt)$ has modulus at most $C_1n^{C_2}$.
\end{itemize}
\end{lemma}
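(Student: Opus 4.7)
The plan is to prove (a) first by specializing to a univariate D-finite series, invoking Proposition~\ref{prop:univariate}, and extracting $f(\bfn)$ via a discrete Fourier inversion; part (b) will then follow from (a) by the triangle inequality.

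First I would apply Lemma~\ref{lem:spec}(b) to any tuple $\bfb = (\beta_1, \ldots, \beta_m)$ of roots of unity, obtaining a univariate D-finite series $F_\bfb(t) := F(\beta_1 t, \ldots, \beta_m t) \in \Qbar[[t]]$ whose $n$-th coefficient is $g_\bfb(n) = \sum_{\|\bfn\| = n} f(\bfn)\bfb^\bfn$. Since each $\beta_i$ is an algebraic integer of height zero, the subadditivity $h(a_1+\cdots+a_N) \le \log N + \max_i h(a_i)$, $h(ab)\le h(a)+h(b)$, together with the multiplicativity of denominators, yield $h^{F_\bfb}_N \le O(\log N) + h_N$ and $d^{F_\bfb}_N \mid d_N$. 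So the hypotheses of Proposition~\ref{prop:univariate} are inherited by $F_\bfb$, which is therefore rational with all finite poles at roots of unity, giving $|g_\bfb(n)| \le C_\bfb n^{D_\bfb}$. To proceed I would then upgrade this to a \emph{uniform} estimate $|g_\bfb(n)| \le C q^k n^D$ valid for every $\bfb$ in the set of tuples of $q$-th roots of unity, with $C,k,D$ independent of $q$ and $\bfb$. This should follow from the fact that the $\Qbar(\bfx)$-D-module attached to $F$ has some finite rank $R$: pulling back along the substitution $\bfx \mapsto \bfb t$ forces each $F_\bfb$ to satisfy an ODE of order $\le R$ whose leading coefficient has $t$-degree bounded independently of $\bfb$, which limits both the number and the multiplicities of the poles of $F_\bfb$ and produces a partial-fraction expansion of uniformly bounded shape.

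Granting this uniform estimate, fix $\bfn_0 \in \N_0^m$ with $n := \|\bfn_0\|$ and choose any integer $q > n$. Set $\zeta := e^{2\pi i/q}$ and $\bfb_\bfa := (\zeta^{a_1}, \ldots, \zeta^{a_m})$ for $\bfa \in (\Z/q\Z)^m$. By orthogonality of characters on $(\Z/q\Z)^m$, combined with the observation that $\|\bfn\| = n < q$ together with $\bfn \equiv \bfn_0 \pmod q$ forces $\bfn = \bfn_0$, one obtains
\[
q^m f(\bfn_0) \;=\; \sum_{\bfa \in (\Z/q\Z)^m} g_{\bfb_\bfa}(n)\,\zeta^{-\bfa \cdot \bfn_0}.
\]
The triangle inequality and the uniform bound then give $|f(\bfn_0)| \le \max_\bfa |g_{\bfb_\bfa}(n)| \le C(n+1)^k n^D$, which is polynomial in $\|\bfn_0\|$ and proves (a). Part (b) follows immediately: for any $(\alpha_1,\ldots,\alpha_m) \in \mathbb{T}^m$ and $n \ge 1$ the coefficient of $t^n$ in $F(\alpha_1 t, \ldots, \alpha_m t)$ has modulus at most $\sum_{\|\bfn\|=n} |f(\bfn)| \le \binom{n+m-1}{m-1} C_1 n^{C_2} = O(n^{C_2+m-1})$, which yields (b) after renaming constants.

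The main obstacle is the uniform polynomial bound asserted in the first paragraph: although Proposition~\ref{prop:univariate} gives rationality and polynomial coefficient growth for each individual $F_\bfb$, the constants $C_\bfb, D_\bfb$ a priori depend on $\bfb$, and translating the bounded complexity of the D-module of $F$ into uniform control on both the pole structure and the partial-fraction coefficients of the specialized rational functions $F_\bfb$, as $\bfb$ ranges over the discrete but infinite family of root-of-unity tuples, is the crux of the argument.
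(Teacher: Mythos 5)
Your approach is genuinely different from the paper's, but it has a gap that you yourself identify and do not close. You reduce part (a) to the claim that the specializations $F_{\bfb}(t)=F(\beta_1 t,\ldots,\beta_m t)$ satisfy a coefficient bound $|g_{\bfb}(n)|\le C q^k n^D$ uniformly over all tuples $\bfb$ of $q$-th roots of unity. The D-module argument you sketch only controls the \emph{shape} of the partial-fraction decomposition of $F_{\bfb}$ (bounded number of poles, bounded multiplicities), not the \emph{size} of the partial-fraction coefficients. Those coefficients are values like $A_{\bfb}(\theta^{-1})/B_{\bfb}'(\theta^{-1})$ at the poles $\theta$, and a priori they can grow without bound as $\bfb$ varies, especially when the specialized denominator $B_{\bfb}$ develops nearly coincident roots or when the leading polynomial of the common ODE degenerates at $\bfb$. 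There is also an inherent circularity worry: the naive bound $|g_{\bfb}(n)|\le \sum_{\|\bfn\|=n}|f(\bfn)|$ is exactly what one is trying to establish, so the uniform bound must genuinely use the rational structure, and that step is missing. Since your Fourier-inversion step needs $q>n$, the missing estimate is not a uniformity over a fixed finite set but over an infinite family of specializations, which is precisely where the argument stalls.

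The paper avoids this issue entirely via a Hadamard-square trick. Let $\sigma$ be complex conjugation; then $\sigma(F)=\sum\sigma(f(\bfn))\bfx^{\bfn}$ is D-finite, and so is the Hadamard product $G(\bfx)=\sum|f(\bfn)|^2\bfx^{\bfn}$ (Lipshitz). By Lemma~\ref{lem:spec}(b) the single univariate series $G(t,\ldots,t)=\sum_n g(n)t^n$ with $g(n)=\sum_{\|\bfn\|=n}|f(\bfn)|^2$ is D-finite, and the height and denominator hypotheses pass to it (each $|f(\bfn)|^2$ has height at most $2h(f(\bfn))$ and denominator dividing $\den(f(\bfn))^2$, and summing $O(n^{m-1})$ such terms costs only $O(\log n)$). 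Proposition~\ref{prop:univariate} then gives that $G(t,\ldots,t)$ is rational with roots-of-unity poles, hence $g(n)\ll n^{C_3}$. Because each summand $|f(\bfn)|^2$ is nonnegative, $|f(\bfn)|^2\le g(\|\bfn\|)$ term-by-term, so $|f(\bfn)|\ll \|\bfn\|^{C_3/2}$, proving (a); part (b) is then the same triangle-inequality step you use. The key advantage is that nonnegativity makes the aggregate control the individual terms directly, so only one univariate specialization is needed and no uniformity over an infinite family of root-of-unity tuples ever arises. You may want to either adopt this device or supply a genuine proof of the uniform bound, since as written the proposal does not establish the lemma.
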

\begin{proof}
Let $\sigma$ be the complex conjugate automorphism.  Then both $F$ and the series  
$\sigma(F):=\displaystyle\sum_{\bfn}\sigma(f(\bfn))\bfx^\bfn$ are D-finite. Therefore the Hadamard product
$$G(\bfx)=\sum_{\bfn}\vert f(\bfn)\vert^2\bfx^\bfn$$
is also D-finite \cite{Lip89_DF}. The univariate series
$$G(t,\ldots,t)=:\sum_{n}g(n)t^n$$
is D-finite and satisfies: 
\begin{itemize}
\item $h(g(n))=o(n)$; and
\item $\log\lcm\{\den(g(n)):\ n\leq N\}=o(N)$ thanks to the assumption on $h_N$ and $d_N$ for the original series $F$. 
\end{itemize}
By Proposition~\ref{prop:univariate}, $G(t,\ldots,t)$ is a rational function and its poles are roots of unity. Therefore, along suitable arithmetic progressions, we can express the $g(n)$'s as polynomials in $n$ and this implies 
$$\sum_{n_1+\cdots+n_m=n} \vert f(n_1,\ldots,n_m)\vert^2=g(n) \ll n^{C_3}$$
for some positive constant $C_3$. Therefore $\vert f(n_1,\ldots,n_m)\vert \ll n^{C_3/2}$.
Hence if $(\alpha_1,\ldots,\alpha_m)\in\mathbb{T}^m$, the coefficient of $t^n$ in $F(\alpha_1t,\ldots,\alpha_mt)$ has
modulus at most
$$\sum_{n_1+\cdots+n_m=n} \vert f(n_1,\ldots,n_m)\vert\ll n^{m-1+C_3/2}.$$ The result follows.
\end{proof}

\begin{lemma}\label{lem:F(alpha1t,...) is rational}
Let $m$, $F=\displaystyle\sum f(\bfn)\bfx^{\bfn}$, $h_N$, and $d_N$ be as in the statement of Theorem~\ref{thm:main}. If $h_N=o(N)$ and $\log d_N=o(N)$, then for every tuple of roots of unity $(\zeta_1,\ldots,\zeta_m)$, the univariate series
$F(\zeta_1t,\ldots,\zeta_mt)$ is a rational function and all of its finite poles are roots of unity.
\end{lemma}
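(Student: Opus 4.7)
The plan is to reduce directly to the univariate case already established in Proposition~\ref{prop:univariate}. Fix a number field $K$ that contains every coefficient of $F$ together with the roots of unity $\zeta_1,\ldots,\zeta_m$. By Lemma~\ref{lem:spec}(b), the specialization
$$G(t) := F(\zeta_1 t,\ldots,\zeta_m t) \in K[[t]]$$
is a univariate D-finite power series, so it suffices to check that $G$ inherits the two hypotheses of Proposition~\ref{prop:univariate}; the rationality together with the statement on poles is then immediate.

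Write $G(t)=\sum_{n\geq 0} a_n t^n$, where
$$a_n = \sum_{\Vert\bfn\Vert = n} f(\bfn)\,\zeta_1^{n_1}\cdots\zeta_m^{n_m}.$$
The number of summands is $\binom{n+m-1}{m-1}=O(n^{m-1})$. Since each $\zeta_i$ has height $0$, the standard inequalities for heights of sums and products of algebraic numbers give
$$h(a_n) \leq \max_{\Vert\bfn\Vert = n} h(f(\bfn)) + O(\log n) \leq h_n + O(\log n).$$
In particular, if $h_N^G := \max_{n\leq N} h(a_n)$, then $h_N^G = o(N)$.

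For the denominators, each $\zeta_i$ is an algebraic integer, so $\zeta_1^{n_1}\cdots\zeta_m^{n_m}$ is too; hence $\den(f(\bfn)\zeta_1^{n_1}\cdots\zeta_m^{n_m})$ divides $\den(f(\bfn))$. Because the denominator of a sum divides the lcm of denominators of the summands, $\den(a_n)$ divides $\lcm\{\den(f(\bfn)):\Vert\bfn\Vert=n\}$, which in turn divides $d_n$. Therefore $d_N^G := \lcm\{\den(a_n) : n\leq N\}$ divides $d_N$, and $\log d_N^G = o(N)$.

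Both hypotheses of Proposition~\ref{prop:univariate} are thus verified for $G(t)$, and the conclusion follows. There is no substantive obstacle: the step is essentially a bookkeeping argument exploiting the fact that roots of unity have height zero and are algebraic integers, so that specialization at roots of unity degrades neither the height growth nor the denominator growth.
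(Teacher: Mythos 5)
Your overall strategy is correct and is what the paper intends (the paper's proof is a one-line reference to ``similar arguments to Lemma~\ref{lem:C1n^C2} and Proposition~\ref{prop:univariate}''): specialize via Lemma~\ref{lem:spec}(b), then feed the resulting univariate D-finite series into Proposition~\ref{prop:univariate}. Your denominator estimate is also fine. However, the height estimate contains a genuine error: the inequality
$$h\Bigl(\textstyle\sum_{i=1}^k\alpha_i\Bigr)\ \leq\ \max_i h(\alpha_i)+O(\log k)$$
is not a standard fact and is false. (Take $\alpha_i=1/p_i$ for distinct primes $p_i$ of comparable size; then $h(\sum\alpha_i)\approx\sum_i\log p_i\approx k\max_i h(\alpha_i)$, which dwarfs $\max_i h(\alpha_i)+\log k$.) The correct general bound is $h(\sum\alpha_i)\leq\log k+\sum_i h(\alpha_i)$, but that gives $h(a_n)\leq O(\log n)+O(n^{m-1})h_n$, which is \emph{not} $o(n)$ when $m\geq 2$ (e.g.\ if $h_n\sim\log n$). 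So as written the argument for $h_N^G=o(N)$ does not go through, and the naive replacement does not save it either.

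The fix is exactly the denominator information you already compute in the next paragraph, combined with an archimedean/non-archimedean split. Writing $h=h_\infty+h_0$ (contributions from archimedean and finite places), you get $h_0(a_n)\leq\log\den(a_n)\leq\log d_n=o(n)$ from your denominator argument, while for each of the finitely many archimedean places $w$ of $K$ the triangle inequality and $|\zeta_i|_w=1$ give
$$\log^+|a_n|_w\ \leq\ \log\binom{n+m-1}{m-1}+\max_{\Vert\bfn\Vert=n}\log^+|f(\bfn)|_w\ \leq\ O(\log n)+h_n,$$
so that $h_\infty(a_n)\leq O(\log n)+O(h_n)=o(n)$, where the implied constant depends only on the number of archimedean places. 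Combining gives $h(a_n)=o(n)$, and both hypotheses of Proposition~\ref{prop:univariate} are verified. In short: the structure of your proof is right, but you should not try to bound the height of $a_n$ without invoking the denominator hypothesis on the finite places; the ``height of a sum $\leq$ max height $+$ log(number of terms)'' step is where it breaks.
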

\begin{proof}
This uses similar arguments to those in the proof of Lemma~\ref{lem:C1n^C2} and Proposition~\ref{prop:univariate}.
\end{proof}

We now prove a key specialization lemma.
\begin{lemma}
\label{lem:W}
Let $E$ be a subfield of $\C$ and let $F(x_1,\ldots ,x_m)\in E[[x_1,\ldots ,x_m]]$ be D-finite.  Suppose that there exist positive constants $C_1$ and $C_2$ such that for all $(\alpha_1,\ldots ,\alpha_m)\in \mathbb{T}^m$ and for all $n\ge 1$, the coefficient of $t^n$ in
$F(\alpha_1 t,\ldots ,\alpha_m t)$ is at most $C_1 n^{C_2}$ in modulus.  Then there exists a nonzero polynomial $W(x_1,\ldots ,x_m, t)\in E[x_1,\ldots ,x_m,t]$ such that whenever
 $(\alpha_1,\ldots ,\alpha_m)\in \mathbb{T}^m$ is such that $F(\alpha_1t,\ldots ,\alpha_mt) $ is a rational function, the series
 $W(\alpha_1,\ldots ,\alpha_m, t)F(\alpha_1t,\ldots ,\alpha_mt) $ has radius of convergence strictly greater than $1$.
\end{lemma}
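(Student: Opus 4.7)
The plan is to manufacture $W$ as a power of the leading coefficient of a single linear ODE in $t$ satisfied by $F$ along every line $t\mapsto(x_1t,\ldots,x_mt)$ uniformly in $(x_1,\ldots,x_m)$. Writing $\mathbf{y}=(y_1,\ldots,y_m)$ for the variables of $F$, set $H(x_1,\ldots,x_m,t):=F(x_1t,\ldots,x_mt)\in E[[x_1,\ldots,x_m,t]]$. By the chain rule, every $t$-derivative $\partial_t^j H$ is an $E[x_1,\ldots,x_m]$-linear combination of evaluations $(\partial^{\bfn}F/\partial\mathbf{y}^{\bfn})(x_1t,\ldots,x_mt)$ with $\|\bfn\|\le j$. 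Since $F$ is D-finite, the family $\{\partial^{\bfn}F/\partial\mathbf{y}^{\bfn}\}_{\bfn}$ spans a finite-dimensional $E(\mathbf{y})$-vector space, with some basis $e_1,\ldots,e_d$ that we may pick from among the derivatives themselves (so $e_i\in E[[\mathbf{y}]]$). Evaluating at $\mathbf{y}=(x_1t,\ldots,x_mt)$ then shows that every $\partial_t^j H$ lies in the finite-dimensional $E(x_1,\ldots,x_m,t)$-subspace generated by $e_1(x_1t,\ldots,x_mt),\ldots,e_d(x_1t,\ldots,x_mt)$. Clearing denominators in a nontrivial $E(x_1,\ldots,x_m,t)$-linear dependence among $\partial_t^0 H,\partial_t^1 H,\ldots$ yields polynomials $P_0,\ldots,P_k\in E[x_1,\ldots,x_m,t]$ with $P_k\ne 0$ satisfying $\sum_{i=0}^k P_i(x_1,\ldots,x_m,t)\,\partial_t^i H=0$. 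I then define $W(x_1,\ldots,x_m,t):=P_k(x_1,\ldots,x_m,t)^{C_2+1}$, a nonzero element of $E[x_1,\ldots,x_m,t]$.

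To verify the conclusion, fix $(\alpha_1,\ldots,\alpha_m)\in\mathbb{T}^m$ with $R(t):=F(\alpha_1t,\ldots,\alpha_mt)$ rational. If $P_k(\alpha_1,\ldots,\alpha_m,t)$ vanishes identically in $t$, then $W(\alpha_1,\ldots,\alpha_m,t)R(t)=0$, whose radius of convergence is $\infty$. Otherwise, specializing the ODE at $x_i=\alpha_i$ gives a nontrivial linear equation $\sum_i P_i(\alpha_1,\ldots,\alpha_m,t)R^{(i)}(t)=0$. At a prospective pole $t_0$ of $R$ of order $e\ge 1$ with $P_k(\alpha_1,\ldots,\alpha_m,t_0)\ne 0$, the leading term $P_k(\alpha_1,\ldots,\alpha_m,t)R^{(k)}(t)$ would contribute a pole of order exactly $e+k$ at $t_0$, strictly dominating the contributions of $P_iR^{(i)}$ for $i<k$ (which are of order at most $e+k-1$ at $t_0$); the equation could then not hold. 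Hence every pole of $R$ must be a root of $P_k(\alpha_1,\ldots,\alpha_m,t)$.

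To finish, the growth hypothesis $|[t^n]R(t)|\le C_1 n^{C_2}$ forces $R$ to have no pole strictly inside the unit disk (any such pole would yield exponential coefficient growth) and to have poles on $|t|=1$ of order at most $C_2+1$ (from the standard partial-fraction description of rational generating functions). Multiplying $R$ by $P_k(\alpha_1,\ldots,\alpha_m,t)^{C_2+1}$ therefore cancels every pole of $R$ lying in the closed unit disk, so the remaining poles of the rational function $W(\alpha_1,\ldots,\alpha_m,t)R(t)$ lie strictly outside $\{|t|\le 1\}$, and its power series expansion about $t=0$ has radius of convergence strictly greater than $1$. I expect the main technical step to be the chain-rule and substitution argument producing the uniform ODE in $t$ with polynomial coefficients in $(x_1,\ldots,x_m,t)$; once that ODE is in place, the pole-order comparison and the partial-fraction growth analysis are classical.
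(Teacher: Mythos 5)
Your proof is essentially the same as the paper's: both derive a linear ODE $\sum_i P_i(x_1,\ldots,x_m,t)\,\partial_t^i H=0$ for $H(x_1,\ldots,x_m,t)=F(x_1t,\ldots,x_mt)$ with polynomial coefficients, observe that after specializing at $(\alpha_1,\ldots,\alpha_m)\in\mathbb{T}^m$ any pole of the rational function $F(\alpha_1t,\ldots,\alpha_mt)$ must be a zero of the leading coefficient, use the coefficient bound $C_1n^{C_2}$ to conclude there are no poles in the open disk and that boundary poles have bounded order, and then take $W$ to be a suitable power of that leading coefficient. One small slip: you set $W=P_k^{C_2+1}$, but $C_2$ is only assumed to be a positive real, so this need not be a polynomial; the paper uses $\lfloor C_2\rfloor+1$ (any integer exceeding $C_2$ works, since a pole of order $e$ on $\mathbb{T}$ forces coefficient growth of order $n^{e-1}$, giving $e\le\lfloor C_2\rfloor+1$).
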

\begin{proof}
Let $$G(t)=F(x_1 t,\ldots ,x_m t)\in E[x_1,\ldots,x_m][[t]].$$  
Given $j_1,\ldots ,j_d\in \{1,\ldots ,m\}$, we let $F_{j_1,\ldots ,j_d}(x_1,\ldots ,x_m)$ denote the $d$-th partial derivative
$$\displaystyle\frac{\partial^d}{\partial{x_{j_1}}\cdots \partial{x_{j_d}}} F(x_1,\ldots ,x_m).$$
 
Then 
\begin{equation}\label{eq:G^{(d)}}
G^{(d)}(t) = \sum x_{j_1}x_{j_2}\cdots x_{j_d} F_{j_1,\ldots ,j_d}(x_1 t,\ldots ,x_m t),
\end{equation}
 where the sum runs over all $d$-tuples in $\{1,\ldots ,m\}^d$.  Now since $F$ is  D-finite, there exists some natural number $T$ such that for $i=1,\ldots ,m$, we have
$$Q_{i,T}(x_1,\ldots ,x_m) \frac{\partial^T}{\partial{x_i}^T} F(x_1,\ldots ,x_m) = \sum_{j<T} Q_{i,j}(x_1,\ldots ,x_m) \frac{\partial^j}{\partial{x_i}^j} F(x_1,\ldots ,x_m) $$ for some polynomials $Q_{i,j}\in E[x_1,\ldots,x_m]$ with $Q_{i,T}$ nonzero.  It follows that for every $d$ and every $d$-tuple $j_1,\ldots ,j_d$, we have
$$F_{j_1,\ldots ,j_d}(x_1,\ldots ,x_m) \in \sum_{s\le Tm} \sum_{(i_1,\ldots ,i_s)}  E(x_1,\ldots ,x_m) F_{i_1,\ldots ,i_s}(x_1,\ldots, x_m),$$
where the second sum runs over elements of $\{1,\ldots ,m\}^s$. Combining this with \eqref{eq:G^{(d)}}, we have that for every $d$:
$$G^{(d)}(t) = \sum_{s\le Tm} \sum_{(i_1,\ldots ,i_s)} q_{(i_1,\ldots ,i_s),d}F_{i_1,\ldots ,i_s}(x_1 t,\ldots ,x_m t),$$
for some $q_{(i_1,\ldots ,i_s),d}\in E(x_1,\ldots,x_m,t)$. 
Put $M=\displaystyle\sum_{s\leq Tm}m^s$.  Then the $M+1$ power series $G^{(d)}(t)$ for $0\leq d\leq M$ satisfy a non-trivial linear dependence relation:
$$\sum_{d=0}^{M} s_d(x_1,\ldots ,x_m, t) G^{(d)}(t) = 0,$$ with 
$s_0,\ldots,s_M\in E[x_1,\ldots,x_m,t]$.  Let $L$ be the largest index $d$ for which $s_d$ is a nonzero polynomial.

Let $(\alpha_1,\ldots,\alpha_m)\in\mathbb{T}^m$ such that  $A(t):=F(\alpha_1t,\ldots,\alpha_mt)$ 
is a rational function. First consider the case when $s_L(\alpha_1,\ldots,\alpha_m,t)$
is a nonzero polynomial in $\C[t]$. Then we have
the relation
$$\sum_{d=0}^{L}s_d(\alpha_1,\ldots ,\alpha_m, t) A^{(d)}(t) = 0.$$ 
Thanks to this relation, we have that if $A(t)$ has a pole at 
$t=\alpha$ then the leading polynomial $s_L(\alpha_1,\ldots, \alpha_m,t)$ has  a 
zero at $t=\alpha$.  By the given upper bounds on the 
modulus of the coefficients of $A(t)$, we have that
$A(t)$ converges on the open unit disc and every pole $
\alpha\in \mathbb{T}$
has order at most 
$\lfloor C_2\rfloor+1$. Therefore $s_L(\alpha_1,\ldots,\alpha_m,t)^{\lfloor C_2\rfloor+1}A(t)$ has radius of convergence greater than $1$. The case when
$s_L(\alpha_1,\ldots,\alpha_m,t)$
is the zero polynomial is obvious: the product $s_L(\alpha_1,\ldots,\alpha_m,t)^{\lfloor C_2\rfloor+1}A(t)$ is identically zero. We now take $W(x_1,\ldots,x_m,t)=s_L(x_1,\ldots,x_m,t)^{\lfloor C_2\rfloor+1}$ and finish the proof.
\end{proof}

\begin{corollary}\label{cor:WF(alpha1t,...) is a polynomial}
Let $m$, $F=\displaystyle\sum
f(\bfn)\bfx^{\bfn}$, $h_N$, and $d_N$ be as in the statement of Theorem~\ref{thm:main}. If $h_N=o(N)$ and $\log d_N=o(N)$, then there exists a nonzero polynomial $W(x_1,\ldots,x_m,t)\in \Qbar[x_1,\ldots,x_m,t]$ such that for every tuple of roots of unity $(\alpha_1,\ldots,\alpha_m)$, the series
$W(\alpha_1,\ldots,\alpha_m,t)F(\alpha_1t,\ldots,\alpha_mt)$ is a polynomial.
\end{corollary}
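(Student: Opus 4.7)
The plan is to combine the three preceding lemmas in an essentially mechanical way. First, I would apply Lemma~\ref{lem:C1n^C2}(b) to the power series $F$, which provides constants $C_1,C_2>0$ such that for every $(\alpha_1,\ldots,\alpha_m)\in\mathbb{T}^m$ and every $n\ge 1$, the $t^n$-coefficient of $F(\alpha_1 t,\ldots,\alpha_m t)$ has modulus at most $C_1 n^{C_2}$. This is precisely the hypothesis required to invoke Lemma~\ref{lem:W}, with $E=\overline{\mathbb{Q}}$ (a subfield of $\mathbb{C}$, since one may embed any number field of definition of $F$ into $\mathbb{C}$). Lemma~\ref{lem:W} then produces a nonzero polynomial $W(x_1,\ldots,x_m,t)\in\overline{\mathbb{Q}}[x_1,\ldots,x_m,t]$ with the property that for every $(\alpha_1,\ldots,\alpha_m)\in\mathbb{T}^m$ for which $F(\alpha_1 t,\ldots,\alpha_m t)$ is rational, the product $W(\alpha_1,\ldots,\alpha_m,t)F(\alpha_1 t,\ldots,\alpha_m t)$ has radius of convergence strictly greater than $1$.

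Next, I would fix an arbitrary tuple of roots of unity $(\alpha_1,\ldots,\alpha_m)$. By Lemma~\ref{lem:F(alpha1t,...) is rational}, the univariate specialization $A(t):=F(\alpha_1 t,\ldots,\alpha_m t)$ is a rational function whose finite poles are all roots of unity, hence lie on the unit circle $\mathbb{T}$. In particular, the hypothesis of Lemma~\ref{lem:W} is satisfied for this tuple, so the product $W(\alpha_1,\ldots,\alpha_m,t)A(t)$ has radius of convergence strictly greater than $1$.

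Finally, I would observe that multiplying the rational function $A(t)$ by the polynomial $W(\alpha_1,\ldots,\alpha_m,t)\in\mathbb{C}[t]$ yields a rational function whose finite poles form a subset of the finite poles of $A(t)$, and therefore lie on $\mathbb{T}$. Since this rational function simultaneously converges on some disc of radius $r>1$, it can have no finite poles in $\{|t|\le 1\}$, and so it has no finite poles at all. A rational function of $t$ with no finite poles is a polynomial in $t$, which is exactly the desired conclusion.

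No step is really an obstacle here, since the heavy lifting has been done in Lemmas~\ref{lem:C1n^C2}, \ref{lem:F(alpha1t,...) is rational}, and \ref{lem:W}; the only mild subtlety is noting that the polynomial $W$ supplied by Lemma~\ref{lem:W} is uniform in the tuple $(\alpha_1,\ldots,\alpha_m)$, so the same $W$ works for every tuple of roots of unity, and that one must combine the information "poles on $\mathbb{T}$" with "radius of convergence $>1$" to upgrade rationality to polynomiality.
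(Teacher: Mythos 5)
Your proof is correct and follows essentially the same route as the paper: invoke Lemma~\ref{lem:C1n^C2} to verify the hypothesis of Lemma~\ref{lem:W}, combine with Lemma~\ref{lem:F(alpha1t,...) is rational} to get rationality with all finite poles at roots of unity, and then use the radius-of-convergence-greater-than-$1$ conclusion to rule out any finite poles at all. The paper phrases the final step slightly differently (it derives ``poles are roots of unity'' from the height bound on the coefficients of the product $W\cdot F(\alpha_1 t,\ldots,\alpha_m t)$ directly rather than from the poles of $F(\alpha_1 t,\ldots,\alpha_m t)$), but the two arguments are mathematically the same.
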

\begin{proof}
Lemma~\ref{lem:C1n^C2} implies that $F$ satisfies the conditions in Lemma~\ref{lem:W} and so there exists a polynomial $W(x_1,\ldots,x_m,t)$ as in the conclusion of the statement of Lemma~\ref{lem:W}. Then for every tuple $(\alpha_1,\ldots,\alpha_m)$ of roots of unity,  $A(t):=W(\alpha_1,\ldots,\alpha_m,t)F(\alpha_1t,\ldots,\alpha_mt)$ is a rational function without any poles in the \emph{closed} unit disc. Since the coefficient of $t^n$ in $A(t)$ has height $o(n)$, the only possible finite poles of $A(t)$ are roots of unity. 
Therefore $A(t)$ must be a polynomial.
\end{proof}

\begin{lemma}\label{lem:h(root)<C}
Let $P(x_1,\ldots,x_m,t)\in\Qbar[x_1,\ldots,x_m,t]$ be nonzero and let $\mathcal{X}$ be a Zariski dense subset of $\C^m$ consisting of $m$-tuples of roots of unity. Then there exist a positive constant $C$ and a Zariski dense subset $\mathcal{Y}$ of $\mathcal{X}$ such that every $(\zeta_1,\ldots,\zeta_m,\alpha)\in \mathcal{Y}\times\Qbar$ with $P(\zeta_1,\ldots,\zeta_m,\alpha)=0$ satisfies $h(\alpha)<C$.
\end{lemma}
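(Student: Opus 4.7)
The plan is to regard $P$ as a polynomial in $t$ whose coefficients lie in $\Qbar[x_1,\ldots,x_m]$. Write
$$P(x_1,\ldots,x_m,t)=\sum_{i=0}^{d}P_i(x_1,\ldots,x_m)\,t^i,$$
with $P_d\neq 0$. I would set $\mathcal{Y}:=\mathcal{X}\setminus V(P_d)$ and first verify that $\mathcal{Y}$ is Zariski dense in $\C^m$. Since $V(P_d)\subsetneq \C^m$ is a proper closed subset of the irreducible variety $\C^m$ and $\mathcal{X}\subseteq \overline{\mathcal{Y}}\cup V(P_d)$ is Zariski dense, the irreducibility of $\C^m$ forces $\overline{\mathcal{Y}}=\C^m$. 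This is the one genuinely structural step: without removing the bad locus, the leading coefficient could vanish and a specialized root could acquire arbitrarily large height.

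Next, for every $(\zeta_1,\ldots,\zeta_m)\in\mathcal{X}$, the height of each coefficient $P_i(\zeta_1,\ldots,\zeta_m)$ is uniformly bounded. Indeed, each $\zeta_j$ is a root of unity, so $h(\zeta_j)=0$, and the standard inequalities $h(\alpha\beta)\le h(\alpha)+h(\beta)$ and $h(\alpha+\beta)\le h(\alpha)+h(\beta)+\log 2$ applied to the (finitely many) monomials of $P_i$ yield a constant $C_i$, depending only on $P_i$, such that $h(P_i(\zeta_1,\ldots,\zeta_m))\le C_i$ for every tuple of roots of unity. This uniform bound requires nothing about $\mathcal{Y}$; the restriction to $\mathcal{Y}$ is only needed so that $P_d(\zeta_1,\ldots,\zeta_m)\neq 0$.

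Finally, for $(\zeta_1,\ldots,\zeta_m)\in\mathcal{Y}$ the specialized polynomial has degree exactly $d$ with nonzero leading coefficient, so I would apply the standard bound for the height of the roots of a univariate polynomial in terms of the heights of its coefficients. Concretely, after passing to the monic form with coefficients $P_i(\zeta)/P_d(\zeta)$ for $0\le i<d$ and using $h(a/b)=h(ab^{-1})\le h(a)+h(b)$, any root $\alpha\in\Qbar$ of $P(\zeta_1,\ldots,\zeta_m,t)$ satisfies
$$h(\alpha)\le \log 2+\max_{0\le i<d}\bigl(h(P_i(\zeta))+h(P_d(\zeta))\bigr)\le \log 2+\max_{0\le i\le d}C_i+C_d=:C,$$
a constant depending only on $P$. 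Replacing $C$ by $C+1$ to obtain the strict inequality completes the argument.

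The argument is essentially routine; there is no serious obstacle. The only conceptual point is the Zariski-density verification for $\mathcal{Y}$, which rests on the irreducibility of $\C^m$ together with the hypothesis that $\mathcal{X}$ is Zariski dense and $P_d\neq 0$. Everything else is a straightforward application of basic height inequalities, exploiting crucially that roots of unity have height zero.
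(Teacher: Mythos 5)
Your argument is essentially identical to the paper's proof: take $\mathcal{Y}$ to be $\mathcal{X}$ minus the zero locus of the leading coefficient $P_d$ (density following since $V(P_d)$ is a proper closed subset of the irreducible $\C^m$), use that roots of unity have height zero to bound $h(P_j(\zeta_1,\ldots,\zeta_m))$ uniformly, and then invoke the standard bound on heights of roots of a univariate polynomial in terms of its degree and coefficient heights, for which the paper cites \cite[Theorem~1.6.13]{BG06_HI}. One minor caveat: the explicit inequality you write after passing to the monic form is not the sharp standard statement (Cauchy-type bounds introduce exponents depending on the gap to the leading term), but since all you need is some constant depending only on $P$, which that theorem supplies, the conclusion is unaffected.
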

\begin{proof}
Write $$P(x_1,\ldots,x_m,t)=\displaystyle\sum_{j=0}^d P_j(x_1,\ldots,x_m)t^j,$$
with $P_d\neq 0$. Let $\mathcal{Y}$ be the set of $(\zeta_1,\ldots,\zeta_m)\in\mathcal{X}$ such that $P_d(\zeta_1,\ldots,\zeta_m)\neq 0$. 
We have an explicit upper bound on the height of every zero $t=\alpha$ of $$P(\zeta_1,\ldots,\zeta_m,t)=\sum_{j=0}^d P_j(\zeta_1,\ldots,\zeta_m)t^j$$ in terms of $d$ and $\max_jh(P_j(\zeta_1,\ldots,\zeta_m))$, see 
\cite[Theorem~1.6.13]{BG06_HI}. Since the $\zeta_i$'s are roots of unity, 
we have a uniform upper bound for $\max_j h(P_j(\zeta_1,\ldots,\zeta_m))$.
\end{proof}

We can now prove the rationality part of our main result.
\begin{proof}[Proof of Theorem \ref{thm:main}(a)]
Let $W(x_1,\ldots,x_m,t)$ be as in the conclusion of Corollary~\ref{cor:WF(alpha1t,...) is a polynomial}. Write:
$$\sum_{j=0}^{\infty} g_jt^j:=G(t):=W(x_1,\ldots,x_m,t)F(x_1t,\ldots,x_mt)\in \Qbar[x_1,\ldots,x_m][[t]]$$
with $g_j\in\Qbar[x_1,\ldots,x_m]$ for every $j$. By using similar arguments to the proof of Lemma~\ref{lem:W}, we have a relation:
$$\sum_{d=0}^L s_d(x_1,\ldots,x_m,t)G^{(d)}(t)=0$$
where $L\in\N_0$, each $s_d\in\Qbar[x_1,\ldots,x_m]$, and  $s_L\neq 0$. This gives rise to a non-trivial linear recurrence relation (with \emph{polynomial} coefficients) among the coefficients $g_n$'s of $G$:
\begin{equation}\label{eq:g_n's with a recurrence relation}
P_R(x_1,\ldots,x_m,n)g_{n+R}+\cdots+P_0(x_1,\ldots,x_m,n)g_n=0
\end{equation}
for $n\geq C$ where $C,R\in\N_0$, $P_0,\ldots,P_R\in \Qbar[x_1,\ldots,x_m,\theta]$, and $P_0P_R\neq 0$. Let $\mathcal{X}\subset \C^m$ be the set of tuples of roots of unity. 
By Lemma~\ref{lem:h(root)<C}, there exists $\mathcal{Y}\subseteq\mathcal{X}$ that is Zariski dense in $\C^m$ and a positive constant $C_1$ such that
$n<C_1$ whenever $P_0(\omega_1,\ldots,\omega_m,n)=0$
with $n\in\N_0$ and $(\omega_1,\ldots,\omega_m)\in\mathcal{Y}$.

Given 
$(\omega_1,\ldots,\omega_m)\in\mathcal{Y}$, since $G(t)$ is a polynomial after the specialization at $(x_1,\ldots,x_m)=(\omega_1,\ldots,\omega_m)$, we have 
$g_n(\omega_1,\ldots,\omega_m)=0$ for all sufficiently large $n$. Let $N:=N(\omega_1,\ldots,\omega_m)$ be the largest index $n$ such that $g_n(\omega_1,\ldots,\omega_m)\neq 0$ if such an $n$ exists; otherwise let $N:=N(\omega_1,\ldots,\omega_m):=-1$. Suppose $N\geq C$ then \eqref{eq:g_n's with a recurrence relation} implies that 
$P_0(\omega_1,\ldots,\omega_m,N)=0$, hence $N\leq C_1$. 

We have proved that for every $n>\max\{C,C_1\}$,  $g_n(\omega_1,\ldots,\omega_m)=0$ for every $(\omega_1,\ldots,\omega_m)\in\mathcal{Y}$. Since $\mathcal{Y}$ is Zariski dense in $\C^m$, we have $g_n=0$ for $n>\max\{C,C_1\}$. Consequently
$$W(x_1,\ldots,x_m,t)F(x_1t,\ldots,x_mt)\in \Qbar[x_1,\ldots,x_m,t].$$ 
Since $W$ is nonzero, there exists a nonzero $\lambda\in\Qbar$ such that $W(x_1,\ldots,x_m,\lambda)$ is a nonzero polynomial in $\Qbar[x_1,\ldots,x_m]$.
Then we replace $x_i$ by $x_i/\lambda$ to have that 
$$A(x_1,\ldots,x_m):=W(x_1/\lambda,\ldots,x_m/\lambda,\lambda)\in\Qbar[x_1,\ldots,x_m]\setminus\{0\}$$
and 
$$A(x_1,\ldots,x_m)F(x_1,\ldots,x_m)\in\Qbar[x_1,\ldots,x_m].$$
This finishes the proof.
\end{proof}

We now prove a result that will quickly give us part (b) of Theorem \ref{thm:main}.
\begin{theorem}\label{thm:denominator}
Let $m\in\N$ and let $F(\bfx)=\sum f(\bfn)\bfx^{\bfn}\in\Qbar[[\bfx]]$ be the power series representation of a rational function. For $N\in\N_0$, put:
$$h_N=\max\{h(\bfn):\ \Vert\bfn\Vert\leq N\}.$$
If $h_N=o(N)$ then up to scalar multiplication every irreducible factor of the denominator of $F$ has the form $1-\zeta\bfx^{\bfn}$ where $\zeta$ is a root of unity and $\bfn\in\N_0^m\setminus\{0\}$.
\end{theorem}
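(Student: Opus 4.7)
The plan is to reduce to univariate power series along ``lines through the origin'' $(\alpha_1 t,\ldots,\alpha_m t)$ parametrized by tuples of roots of unity, use the resulting pole information to produce many torsion points on each irreducible component of the vanishing locus of the denominator, and then invoke a Manin--Mumford-type theorem for tori (Laurent's theorem) to conclude.

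First, write $F=P/Q$ in lowest terms with $Q(0)=1$ and let $R$ be an irreducible factor of $Q$; note $R(0)\ne 0$. The first key step is to show that for every tuple of roots of unity $(\alpha_1,\ldots,\alpha_m)$, every finite pole of the univariate rational function $F(\alpha_1 t,\ldots,\alpha_m t)$ is a root of unity. Its coefficient $g(n)=\sum_{\Vert\bfn\Vert=n} f(\bfn)\alpha_1^{n_1}\cdots\alpha_m^{n_m}$ satisfies $|g(n)|_v\le e^{o(n)}$ at every place $v$ of a suitable number field: at archimedean $v$ one uses $|f(\bfn)|_v\le e^{h(f(\bfn))}\le e^{h_{\Vert\bfn\Vert}}$ together with the $O(n^{m-1})$ term count and $|\alpha_i^{n_i}|_v=1$, while at non-archimedean $v$ the ultrametric inequality gives the same bound. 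Hence the $v$-adic radius of convergence of $F(\alpha_1 t,\ldots,\alpha_m t)$ is at least $1$ for every $v$, so every finite pole $\beta$ satisfies $|\beta|_v\ge 1$ at all $v$; the product formula then forces $|\beta|_v=1$ for all $v$, and Kronecker's theorem identifies $\beta$ as a root of unity.

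Next, I would transfer this univariate information back to $V(R)$. Since $\gcd(P,R)=1$ in $\Qbar[\bfx]$, a dimension count on the surjection $\Gm^m\times\Gm\to\Gm^m$, $((\alpha_1,\ldots,\alpha_m),\beta)\mapsto(\alpha_1\beta,\ldots,\alpha_m\beta)$, shows that the resultant $\Res_t\bigl(P(\alpha_1 t,\ldots,\alpha_m t),\,R(\alpha_1 t,\ldots,\alpha_m t)\bigr)$ is a nonzero polynomial in $(\alpha_1,\ldots,\alpha_m)$. On its Zariski-open complement $U$, every root $\beta$ of $R(\alpha_1 t,\ldots,\alpha_m t)$ is a genuine pole of $F(\alpha_1 t,\ldots,\alpha_m t)$; intersecting $U$ with the dense set of tuples of roots of unity yields a Zariski-dense subset of $(\alpha_1,\ldots,\alpha_m)$ for which every such $\beta$ is a root of unity, so the corresponding points $(\alpha_1\beta,\ldots,\alpha_m\beta)$ are torsion points of $V(R)$. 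A fiber-dimension argument for the projection-from-origin map $V(R)\dashrightarrow\Pl^{m-1}$, which is a dominant generically finite morphism because $R(0)\ne 0$, then shows these torsion points cannot be swept into any proper closed subvariety of $V(R)$, and therefore they form a Zariski-dense subset of $V(R)$.

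Finally, Laurent's theorem implies that $V(R)\cap\Gm^m$ is a torsion coset, say $\{\bfx^{\bfn}=\zeta\}$ for some primitive $\bfn\in\Z^m\setminus\{0\}$ and some root of unity $\zeta$. The condition $R(0)\ne 0$ prevents the affine closure from passing through the origin, which forces every coordinate of $\bfn$ to be nonnegative. Hence $R$ is a scalar multiple of $1-\zeta^{-1}\bfx^{\bfn}$, completing the proof. I expect the main obstacle to be the Zariski-density of torsion points on $V(R)$: while the projection-from-origin trick reduces this to a dimension count, one must carefully juggle several genericity conditions (nonvanishing of the resultant, transversality of lines with $V(R)$, and nonvanishing of the top-degree form of $R$ at the chosen tuples) to guarantee that a Zariski-dense collection of tuples of roots of unity produces torsion points sweeping out every generic direction in $V(R)$.
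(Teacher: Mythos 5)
Your proposal is correct and follows essentially the same strategy as the paper: specialize $F$ along lines $(\alpha_1 t,\ldots,\alpha_m t)$ through the origin parametrized by tuples of roots of unity, use the height bound at all places (i.e., the univariate Kronecker argument) to show the resulting univariate rational functions have only roots of unity as poles, upgrade this to Zariski-density of torsion points on the hypersurface via a generically finite dominant projection together with a gcd/resultant nonvanishing statement, and invoke the torsion-coset theorem for $\Gm^m$ plus the condition $R(0)\neq 0$ to pin down the form of the denominator. The only cosmetic difference is that you project $V(R)\dashrightarrow\Pl^{m-1}$ from the origin and conclude $V(R)$ is a torsion coset of $\Gm^m$ directly, whereas the paper lifts to the hypersurface $\{B(x_1 t,\ldots,x_m t)=0\}\subset\C^{m+1}$, shows it is a torsion coset of $\Gm^{m+1}$ after proving its irreducibility, and then sets $t=1$; the underlying dimension-count and coprimality (your resultant versus the paper's Bézout identity $A\tilde A+B\tilde B=G$) are equivalent.
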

\begin{proof}
The case $m=1$ is given in \cite[Proposition~3.6]{BNZ20_DF} and we will use this case repeatedly by considering $F(\omega_1 t,\ldots,\omega_mt)\in\Qbar[[t]]$ for appropriate tuples $(\omega_1,\ldots,\omega_m)$ of roots of unity as in the earlier arguments.

Since the condition on $h_N$ remains valid when we replace $F$ by its product with a polynomial, we may assume that $F=A/B$ where $A,B\in \Qbar[x_1,\ldots,x_m]$, $B$ is irreducible, and $B$ is not a factor of $A$. By the same arguments as in \cite[Section~3.2]{BNZ20_DF}, we have $B(0,\ldots,0)\neq 0$. Without loss of generality, assume $B(0,\ldots,0)=1$. Let $K$ be a number field and let $S$ be a finite set of places of $K$ containing $M_K^{\infty}$ such that the coefficients of $A$ and $B$ are $S$-integers in $K$. Then it follows that the coefficients of $F$ are $S$-integers.

Let $(\omega_1,\ldots,\omega_n)$ be an $n$-tuple of roots of unity, and let
$$\sum_{n=0}^{\infty}c_nt^n:=F(\omega_1t,\ldots,\omega_mt).$$
Then it follows that $h(c_n)=o(n)$ by using similar estimates as before: for every place $v$ of $K(\omega_1,\ldots,\omega_m)$ lying above a place in $M_K\setminus S$ we have $\vert c_n\vert_v\leq 1$ and for every place $w$ of $K(\omega_1,\ldots,\omega_n)$ lying above a place in $S$ we have $\vert c_n\vert_w=e^{o(n)}$ thanks to the given condition $h_N=o(N)$. Therefore $F(\omega_1t,\ldots,\omega_mt)$ is a rational function and every root of its denominator is a root of unity.

Write $B=1+B_1+\cdots+B_d$ with $d\in\N$, $B_i$ is the homogeneous part of degree $i$ for $1\leq i\leq d$, and $B_d\neq 0$. Then we have:
\begin{equation}\label{eq:B(x1t,...) with homog parts}
B(x_1t,\ldots,x_mt)=1+tB_1(x_1,\ldots,x_m)+\cdots+t^dB_d(x_1,\ldots,x_m).
\end{equation}
First we show that $B(x_1t,\ldots,x_mt)$ is irreducible in $\Qbar[x_1,\ldots,x_m,t]$. Suppose otherwise $B(x_1t,\ldots,x_mt)=PQ$ for non-constant $P,Q\in\Qbar[x_1,\ldots,x_m,t]$. Then there must be $i\in\{1,\ldots,m\}$ such that $\deg_{x_i}(P)>0$; otherwise $P\in \Qbar[t]$ has a zero $\gamma\in \Qbar$, which gives that $B(\gamma x_1,\ldots , \gamma x_m)$ is identically zero contradicting $B(0,\ldots,0)=1$. 
Similarly, $\deg_{x_j}(Q)>0$ for some $j\in\{1,\ldots,m\}$ as well. 
Consequently, there exists a nonzero $\lambda\in\Qbar$ such that
$P(x_1,\ldots,x_m,\lambda)$ and $Q(x_1,\ldots,x_m,\lambda)$ are non-constant polynomials in $\Qbar[x_1,\ldots,x_m]$. Replacing $x_i$ by $x_i/\lambda$ for every $i$, we have the non-trivial factorization
$$B(x_1,\ldots,x_m)=P(x_1/\lambda,\ldots,x_m/\lambda,\lambda)Q(x_1/\lambda,\ldots,x_m/\lambda,\lambda)$$
contradicting the irreducibility of $B(x_1,\ldots,x_m)$ in $\Qbar[x_1,\ldots,x_m]$. 

We have that $B(x_1t,\ldots,x_mt)$ is not a factor of 
$A(x_1t,\ldots,x_mt)$ in the polynomial ring $\Qbar[x_1,\ldots,x_m,t]$; otherwise 
we can specialize $t=1$ and have that $B(x_1,\ldots,x_m)$ 
is a factor of $A(x_1,\ldots,x_m)$ in $\Qbar[x_1,\ldots,x_m]$. 
Hence there exist   $\tilde{A},\tilde{B}
\in\Qbar[x_1,\ldots,x_m,t]$ and $G\in \Qbar[x_1,\ldots ,x_m]$, 
with $G\neq 0$, such that:
\begin{equation}\label{eq:G}
A(x_1t,\ldots,x_mt)\tilde{A}(x_1,\ldots,x_m,t)+B(x_1t,\ldots,x_mt)\tilde{B}(x_1,\ldots,x_m,t)=G.
\end{equation}

Let $\mathcal{Z}\subset \C^{m+1}$ be the zero locus of $B(x_1t,\ldots,x_mt)$ and let $\pi$ denote the projection from $\mathcal{Z}$ to the $(x_1,\ldots,x_m)$ coordinates.
By Equation (\ref{eq:B(x1t,...) with homog parts}), $\pi$ surjects onto the complement of the zero set of the polynomials $B_1,\ldots ,B_d$ in $\C^m$.

 Let $U$ denote the Zariski open set of $\C^m$ defined by $B_dG\neq 0$.  By the above remarks, $U$ is contained in the image of $\pi$. We now let $V=\pi^{-1}(U)\subseteq \mathcal{Z}$ and let $\mathcal{X}$ be the Zariski dense subset of $U$ consisting of
tuples of roots of unity. Since the induced map $V\rightarrow U$ is a surjective finite morphism,  $\pi^{-1}(\mathcal{X})$ is dense 
$V$. For each $(\omega_1,\ldots,\omega_m,\theta)\in \pi^{-1}(\mathcal{X})$, we have the following:
\begin{itemize}
\item $B(\omega_1 t,\ldots,\omega_m t)$ is a polynomial of degree $d$ since $B_d(\omega_1,\ldots,\omega_m)\neq 0$, and $\theta$ is a root of this polynomial,
\item $F(\omega_1t,\ldots,\omega_mt)=\displaystyle\frac{A(\omega_1t,\ldots,\omega_mt)}{B(\omega_1t,\ldots,\omega_mt)}$, and
\item the polynomials $A(\omega_1t,\ldots,\omega_mt)$
and $B(\omega_1t,\ldots,\omega_mt)$ do not have a common root thanks to \eqref{eq:G} and the fact that $G(\omega_1,\ldots,\omega_m)\neq 0$.
\end{itemize}
Therefore $B(\omega_1t,\ldots,\omega_mt)$ is the denominator of $F(\omega_1t,\ldots,\omega_mt)$. Then it follows that the root $\theta$ must be a root of unity.

We have proved that every point in $\pi^{-1}(\mathcal{X})$ is  
a tuple of roots of unity. Since $\pi^{-1}(\mathcal{X})$ is Zariski dense in the hypersurface $\mathcal{Z}$, it follows that $\mathcal{Z}$ is a torsion coset of $\Gm^{m+1}$, see \cite[Chapter~3]{BG06_HI} or \cite[Section~3.2]{BNZ20_DF}. Then we use \eqref{eq:B(x1t,...) with homog parts} and \cite[Chapter~3]{BG06_HI}
to conclude that the defining polynomial $B(x_1t,\ldots,x_mt)$ of $\mathcal{Z}$ must have the form $1-\zeta t^k x_1^{n_1}\cdots x_m^{n_m}$ for some root of unity $\zeta$, $k\in\{1,\ldots,d\}$ and $(n_1,\ldots,n_m)\in\N_0^m$ with $n_1+\cdots+n_m=k$. Consequently, $B(x_1,\ldots,x_m)=1-\zeta x_1^{n_1}\cdots x_m^{n_m}$ and this finishes the proof.
\end{proof}
\begin{proof}[Proof of Theorem \ref{thm:main}(b)] This immediately follows from Theorem \ref{thm:main}(a) and Theorem \ref{thm:denominator}.
\end{proof}


\end{document}